\documentclass[graybox]{svmult}

\usepackage{mathptmx}       
\usepackage{helvet}         
\usepackage{courier}        
\usepackage{type1cm}        
\usepackage{makeidx}         
\usepackage{graphicx}        
\usepackage{multicol}        
\usepackage[bottom]{footmisc}

\usepackage{amsmath}
\usepackage{amssymb}
\usepackage{amscd}
\usepackage{indentfirst}

\newcommand{\Bsp}{{\boldsymbol B}}     
\newcommand{\BspN}{(\Bsp, \, \|\ebbes\|_\Bsp)}     
\newcommand{\ebbes}{\mbox{$\,\cdot\,$}}     
\newcommand{\COPRdN}{{\big( \COPsp(\Rst^d), \, \|\ebbes\|_\COPsp \big)}}     
\newcommand{\COPsp}{{\Csp'_{\negthinspace 0}}}     
\newcommand{\Rst}{{\mathbb R}}     
\newcommand{\Csp}{{\boldsymbol C}}     
\newcommand{\CORd}{{\COsp(\Rst^d)}}     
\newcommand{\COsp}{{\Csp_{\negthinspace 0}}}     
\newcommand{\CORdN}{{\big( \COsp(\Rst^d), \, \|\ebbes\|_\infty \big)}}     
\newcommand{\CbRd}{{\Cbsp(\Rdst)}}     
\newcommand{\Cbsp}{{\Csp_{\negthinspace b}}}     
\newcommand{\Rdst}{{{\Rst^d}}}     
\newcommand{\CbRdN}{{\big( \Cbsp(\Rst^d), \, \|\ebbes\|_\infty \big)}}     
\newcommand{\CcG}{{\Ccsp(G)}}     
\newcommand{\Ccsp}{{\Csp_{\negthinspace c}}}     
\newcommand{\CcRd}{{\Ccsp(\Rst^d)}}     
\newcommand{\Cst}{{\mathbb C}}     
\newcommand{\CuRd}{{\Cusp(\Rdst)}}     
\newcommand{\Cusp}{{\Csp_{\negthinspace\text{\it ub}}}}     
\newcommand{\CubRd}{{\Cusp(\Rdst)}}     
\newcommand{\CubRdN}{{\big( \CuRd, \, \|\ebbes\|_\infty \big)}}     
\newcommand{\Drho}{{\operatorname{D}_{\rho}}}     
\newcommand\FLi{{\mathcal F}{\negthinspace \Lisp}}     
\newcommand{\Lisp}{{\Lsp^1}}     
\newcommand{\FLiRd}{{ \FLi(\Rdst) }}     
\newcommand\FLiRdN{\big( \FLiRd, \, \|\ebbes\|_{\FLisp} \big)}     
\newcommand{\FLisp}{{ {\mathcal F}\Lisp}}     
\newcommand{\FLinsp}{{\mathcal F}\Linsp}     
\newcommand{\Linsp}{{\Lsp^\infty}}     
\newcommand{\FT}{{\operatorname{{\mathcal F} \negthinspace}}}     
\newcommand{\cG}{\mathscr{G}}     
\newcommand\Hilb{\mathcal H}     
\newcommand{\IFT}{\operatorname{\mathcal F}^{-1}}     
\newcommand{\LiG}{{\Lisp(G)}}     
\newcommand{\LiRd}{{\Lisp \negthinspace (\Rst^d)}}     
\newcommand{\LiRdN}{\big( \LiRd, \, \|\ebbes\|_1 \big)}     
\newcommand{\LicG}{{\Lisp(\cG)}}     
\newcommand\Liloc{ {\Lsp^1_{ \negthinspace \it{loc}} } }     
\newcommand{\Lsp}{{\boldsymbol L}}     
\newcommand{\LinG}{{\Linsp(G)}}     
\newcommand{\LinRd}{{\Linsp(\Rst^d)}}     
\newcommand{\LinRdN}{\big( \LinRd, \, \|\ebbes\|_\infty \big)}     
\newcommand\LinfRd{ L^{\infty}(\Rst^{d})}     
\newcommand{\LinfRdN}{\big( \LinfRd, \, \|\ebbes\|_\infty \big)}     
\newcommand{\LpRd}{{\Lpsp(\Rst^d)}}     
\newcommand{\Lpsp}{{\Lsp^p}}     
\newcommand{\LpRdN}{\big( \LpRd, \, \|\ebbes\|_p \big)}     
\newcommand{\LqRd}{{ \Lsp^q(\Rdst) }}
\newcommand{\LqRdN}{\big( \LqRd, \, \|\ebbes\|_q \big)}     
\newcommand{\Ltsp}{{\Lsp^2}}     
\newcommand{\LtRd}{{\Ltsp(\Rst^d)}}     
\newcommand{\LtRdN}{\big( \LtRd, \, \|\ebbes\|_2 \big)}     
\newcommand{\LtRtd}{{\Ltsp(\Rst^{2d})}}     
\newcommand\Ltzo{{(\Ltsp([0,1])}}     
\newcommand\LtzoN{{(\Ltsp([0,1]),\| \ebbes\|_2)}}     
\newcommand{\MbRd}{{\Mbsp(\Rst^d)}}     
\newcommand{\Mbsp}{{\Msp_{\negthinspace b}}}     
\newcommand\MbRdN{{(\Mbsp(\Rst^d), \| \ebbes \|_\Mbsp )}}     
\newcommand{\Msp}{{\boldsymbol M}}     
\newcommand{\MdRd}{ {\Mdsp (\Rdst)}}     
\newcommand{\Mdsp}{{\Msp_{\negthinspace d}}}     
\newcommand{\MiRdN}{{ (\Msp^1(\Rdst), \| \ebbes \|_{\Msp^1}) }}     
\newcommand{\Modul}{\operatorname{M}}
\newcommand{\Nst}{{\mathbb N}}     
\newcommand\QsRd{{\boldsymbol Q}_s (\Rdst) }     
\newcommand\QsRdN{\big( \QsRd, \, \|\ebbes\|_{\Qssp} \big)}     
\newcommand\Qssp{{\Qsp_s}}     
\newcommand\Qsp{ \boldsymbol Q }     
\newcommand{\Qst}{{\mathbb Q}}     
\newcommand{\Rdsth}{{\widehat{\Rst}^d}}     
\newcommand{\Rtdst}{{\Rst^{2d}}}     
\newcommand\Rtst{ {{\Rst}^{2} }}     
\newcommand{\SOGTrRd}{{ (\SOsp,\Ltsp,\SOPsp)(\Rdst) }}     
\newcommand{\SOsp}{{\Ssp_{\negthinspace 0}}}     
\newcommand{\SOPsp}{{\Ssp_{\negthinspace 0}'}}     
\newcommand\SOLSOP{ (\SOsp,\Ltsp,\SOPsp)}     
\newcommand{\SOPG}{{\SOPsp(G)}}     
\newcommand{\SOPRd}{{\SOPsp(\Rst^d)}}     
\newcommand{\SOPRdN}{(\SOPRd , \| \ebbes \|_{\SOPsp} ) }     
\newcommand{\Ssp}{{\boldsymbol S}}     
\newcommand{\SORd}{{\SOsp(\Rst^d)}}     
\newcommand{\SORdN}{\big( \SORd, \|\ebbes\|_\SOsp \big)}     
\newcommand{\ScPRd}{{\ScPsp(\Rst^d)}}     
\newcommand{\ScPsp}{{\Scsp'}}     
\newcommand{\Scsp}{{\boldsymbol{\mathcal S}}}     
\newcommand{\ScRd}{{\Scsp(\Rst^d)}}     
\newcommand\SegN{{ (\Ssp, \| \ebbes \|_\Ssp)}}     
\newcommand{\Strho}{{\operatorname{St}_\rho}}     
\newcommand\TFRd{{{\Rdst\,{\times}\,\Rdsth}}}     
\newcommand{\TFd}{{{ \Rdst \times \Rdsth }}}     
\newcommand{\Trans}{\operatorname{T}}
\newcommand{\Tst}{\mathbb{T}}     
\newcommand{\Ust}{\mathbb{U}}     
\newcommand\Vgsi{{V_{g _0}\sigma}}     
\newcommand\WFLili{{\Wsp(\FLi,\lisp)}}     
\newcommand{\Wsp}{{\boldsymbol W}}     
\newcommand{\lisp}{{\lsp^1}}     
\newcommand\WFLiliRd{{\Wsp(\FLi,\lisp)(\Rdst)}}     
\newcommand\WFLilin{{\Wsp(\FLi,\linsp)}}     
\newcommand{\linsp}{{\lsp^\infty}}     
\newcommand\WFLilinRd{{\Wsp(\FLi,\linsp)(\Rdst)}}     
\newcommand\WFLinli{ \Wsp( \FLinsp, \lisp)}     
\newcommand\WFLinlin{ \Wsp( \FLinsp, \linsp)}     
\newcommand{\WRd}{{\Wsp(\Rdst)}}     
\newcommand\WWRd{{ \WRd \cap \FT \WRd}}     
\newcommand{\Zdst}{{\Zst^d}}     
\newcommand{\Zst}{{\mathbb Z}}     
\newcommand\abZZd{ {a \Zdst \times b \Zdst}}     
\newcommand\bfone{{\bf 1}}     
\newcommand\cdto{\cdot} 
\newcommand{\checkm}{{^\checkmark}}     
\newcommand{\conjug}[1]{\overline{#1}}     
\newcommand{\cosp}{\operatorname{cosp}}     
\newcommand{\df}[1]{{\em #1}}
\newcommand{\eg}{e.g. \,}     
\newcommand{\eps}{\varepsilon}     
\newcommand{\epso}{{ \varepsilon > 0 }}     
\newcommand\fSON{{\|f\|_\SOsp}}     
\newcommand{\fSORd}{ f \in \SORd }     
\newcommand\fafSORd{{ \forall f \in \SORd}}     
\newcommand{\fhat}{{\widehat{f}}}     
\newcommand\fninf{{\|f\|_\infty}}     
\newcommand\gLam{ \left ( g_{\lambda} \right )_{\laL}}     
\newcommand\laL{\lambda \in \Lambda}     
\newcommand{\gd}{{\widetilde{g}}} 
\newcommand{\ghat}{{\widehat{g}}}     
\newcommand\gSON{{\|g\|_\SOsp}}

\newcommand\gin{{\|g\|_1}}     
\newcommand\glam{ g_{\lambda}}     
\newcommand\hatf{{\widehat{f}}}     
\newcommand\hatsi{{\widehat{\sigma}}}     
\newcommand{\ie}{i.e.}     
\newcommand{\intR}{\int_{\Rst}}     
\newcommand{\intRd}{\int_{\Rst^d}}     
\newcommand\intTFd{{\int_{\TFd}}}     
\newcommand\ipinf{{1 \leq p \leq \infty}}     
\newcommand{\lainLa}{{\lambda{\in}\Lambda}}     
\newcommand\liLam{{\lisp(\Lambda)}}     
\newcommand\liN{{\lisp(\Nst)}}     
\newcommand\limninf{{\lim_{n \to \infty}}}     
\newcommand{\linorm}[1]{{\lVert #1 \rVert_1}}     
\newcommand{\lsp}{{\boldsymbol\ell}}     
\newcommand\ltLam{{ \ltsp(\Lambda)}}     
\newcommand{\ltsp}{{\lsp^2}}     
\newcommand{\ltnorm}[1]{{\lVert #1 \rVert_2}}     
\newcommand{\ninN}{{{n{\in}\Nst}}}     
\newcommand\qandq{{ \quad \mbox{and} \quad }}     
\newcommand\siSOP{{ \sigma \in \SOPsp }}     
\newcommand\siSOPN{{\|\sigma\|_\SOPsp}}     
\newcommand\sigmal{ \sigma_\alpha }     
\newcommand{\sonorm}[1]{{\lVert #1 \rVert_\SOsp}}     
\newcommand{\sopnorm}[1]{{\lVert #1 \rVert_\SOPsp}}     
\newcommand{\spec}{\operatorname{spec}}     
\newcommand{\sumlaLa}{\sum_{\lambda\in\Lambda}}     
\newcommand{\sumnZ}{\sum_{n\in\Zst}}     
\newcommand\sumninf{{\sum_{n=-\infty}^{\infty}}}     
\newcommand\supngi{{\sup_{n \geq 1}}}     
\newcommand{\supnorm}[1]{{\lVert #1 \rVert_\infty}}     
\newcommand{\supp}{\operatorname{supp}}     
\newcommand\suprRd{{\sup_{r \in \Rdst}}}     
\newcommand\suth{{ \, | \, } }     
\newcommand{\tensor}{\otimes}     
\newcommand\veps{{\varepsilon}}     
\newcommand{\wdash}{{ w^* \negthinspace \mbox{-}}}     
\newcommand\wst{ w^{*} }     
\newcommand\wstlim{{ \wdash \lim \, }}     
\newcommand{\wwst}{{\wdash \wdash}}     

\def\cG{G}
\def\NSP#1{{(#1,\| \ebbes \|_{#1})}}

\def\seqg#1{(#1_n)_{n \geq 1}}

\makeindex             

\begin{document}
\title*{A Sequential Approach to Mild Distributions}
\titlerunning{Hans G. Feichtinger: A Sequential Approach to Mild Distributions}
\author{Hans G.~Feichtinger}
\authorrunning{Hans  G.~Feichtinger }
\institute{Hans G.~Feichtinger \at Faculty of Mathematics, University of Vienna  \\
\email{hans.feichtinger@univie.ac.at} }

\maketitle



\section{Abstract}

The Banach Gelfand Triple $\SOGTrRd$ consists of $\SORdN$, a very
specific  {\it Segal algebra} as  algebra of test functions, the Hilbert space
$\LtRdN$ and the dual space $\SOPRd$, whose elements are also called {\it ``mild
distributions''}. Together they provide a   universal tool for Fourier Analysis in its many
manifestations.

It is indispensable for a proper formulation of {\it Gabor Analysis},
but also useful for a distributional description of the classical (generalized)
Fourier transform (with Plancherel's Theorem and the Fourier Inversion Theorem as
core statements) or the foundations of
Abstract Harmonic Analysis, as it is not difficult to formulate this theory in the
context of LCA groups.
A new approach presented recently allows to introduce $\SORdN$ and hence $\SOPRdN$,
the space of ``mild distributions'', without Lebesgue integrals or tempered distributions.

The present note describes a more elementary, sequential approach,
inspired by the Lighthill approach to tempered distributions.
By drawing analogies to the real number system (of infinite
decimals) we hope to make it accessible to engineers.

The main topic of this article is thus an outline of the sequential
approach in this concrete setting and the clarification of the fact that
it is just another way of describing the Banach Gelfand Triple.
The objects of the extended domain for the Short-Time Fourier
Transform are (equivalence classes of) so-called {\it mild
Cauchy sequences} (in short {\bf ECmiCS}). Representatives are
sequences of bounded, continuous functions, which
correspond in a natural way to mild distributions as introduced
in earlier papers via duality theory.
Our key result shows how standard functional analytic arguments
combined with concrete properties of the Segal algebra $\SORdN$
can be used to establish this natural identification.

\section{Introduction}

It is the purpose of this article  to present the vector space of
``mild distributions'' as the natural object of all ``signals which
have a bounded spectrogram'' (or STFT: Short-Time Fourier Transform).
We will do this in an elementary way, inspired by the classical
sequential approach to distributions, in the spirit of Lighthill (\cite{li58})
or Bracewell (\cite{br83}, Chap.5), which is also popular among engineers.

The basic principle will be the idea of {\it completion}, which
allows to enlarge an object by adding elements which make it
``more complete''. This is a well-known concept familiar from
the introduction of the real number system: The field $\Qst$ of
rational numbers is great when it comes to multiplication and even
addition can be realized in a purely algebraic way, but unfortunately
it shows some incompleteness, since obviously there is no rational
number $q$ such that $q^2 = 2$.

In order to enlarge $\Qst$ and create a complete field, to be named
the real number system $\Rst$, one has several options, including
Dedekind's section, or the very concrete idea of introducing the
{\it infinite decimal expressions}, endowed with appropriate
computational rules, which turns $\Rst$ into a field which is
complete with respect to the Euclidean distance, i.e.\ where
every Cauchy sequence is convergent. Just recall:
\begin{definition} \label{CSdef1}
A sequence $(q_n)_{n \geq 1}$ in $\Qst$ is called a {\it Cauchy-sequence} whenever
relative distances of elements of the sequence tend to zero whenever
the labels of two elements are both large enough. In symbols:
$$ \forall \epso \, \, \exists n_0 \in \Nst \,\,\mbox{such that} \,\,
  m,n \geq n_0  \Rightarrow  |q_n - q_m| < \eps.
  $$
\end{definition}
Note that it is of course enough to allow rational numbers $\eps \in \Qst$
or only the choice $\eps = 1/n$ for any $n \in \Nst$. 

It is a standard fact that the field $\Qst$ is not complete (with
respect to the metric $d(q,q') = |q-q'|$, because one can find
a Cauchy sequence with $\limninf q_n^2 = 2$, but without finding
a limit $q_0 \in \Qst$ (which then would be a rational number with $q_0^2=2$,
which easily leads to a contradiction).

So instead of looking at $\Qst$ itself one adds (as a replacement
to the non-existent limits) the collection of Cauchy-sequences
representing the same (not yet existent) limit, in order to get
a complete metric space, i.e.\ a larger object that contains
$\Qst$ as a dense subspace, and such that within that larger object
every Cauchy sequence has a limit. Since many different
Cauchy-sequences ``represent the same real number'' one has
to choose either a fixed representation (known as the representation
of real numbers as infinite decimals), or one introduces equivalence
classes of rational numbers, based on the following equivalence
relation:
\begin{definition}
Two Cauchy-sequences $(q_n)_{n\geq 1}$ and $(p_k)_{k \geq 1}$ are
called {\it equivalent} if one can mix them in an arbitrary fashion and
still have  a Cauchy sequence, or in other words, if for $\epso$
there exists $n_1 \in \Nst$ such that
$$  n,k \geq n_1 \Rightarrow  |q_n - p_k| < \eps. $$
\end{definition}
One then goes on and shows that this is indeed an equivalence relation
on the vector space of Cauchy sequences (with natural element-wise operations),
and that one can transfer all the structure (like
addition, multiplication, inverse elements, absolute part) to this
larger object, just called $\Rst$.

It is also important to note that every element   $q \in \Qst$ defines
 an equivalence class, via the constant
sequence:  $q_n = q, \forall n \geq 1$. One has positive
elements in $\Rst$ and the absolute value is again well defined.
Moreover, using the standard diagonalization trick one can show
that every Cauchy-sequence in this new object (it is a rather
tricky object seen in this way) has in fact a limit within $\Rst$,
and hence  every positive $r \in \Rst$ has roots
of a (unique positive) square root, i.e.\ symbols like $\sqrt{2}$
make sense. In fact, this achievement of a (more or less unique)
completion of $(\Qst,| \ebbes|)$ is a  beautiful cornerstone
of mathematical analysis.

Of course this is nothing but a wordy description
(one of various possible) of the field of real numbers $\Rst$,
with the same operations as in $\Qst$, just by natural extension.


If one takes a somewhat wider perspective one realizes that
the decimal system is a very special system. One might use other
number systems, or continued fractions, but as a striking
insight (usually explained in  real analysis courses) any
of the completions obtained gives the same object,
resp.\ provides just an alternative description of $\Rst$, the
natural completion of $\Qst$ (with Euclidean metric).

The most abstract version of the underlying abstract principle
allows to embed any {\it metric space} isometrically into
a complete metric space, as a dense subspace. In other words, one can
create a situation, which is perfectly analogue to the situation of the
embedding $\Qst \hookrightarrow\Rst$: Clearly rational numbers, by definition
expressed as quotients of the form $p/q$, with $p,q \in \Nst, q \neq 0$,
have to be identified with decimal expressions, in the usual way, as we
have learnt in school. Once we have understood that $3/4$ is the same
as $0.75$ we do not care whether $ 3/4 \cdot 6/5 = 18/20 = 9/10$ has
been computed within the rational numbers or as $ 0.75 \cdot 1.2 = 0.75 + 0.15 =
0.9$. Moreover, $\Qst$ is dense in $\Rst$, because obviously
for any $\epso$ there exists some $k \in\Nst$ such that $(1/10)^k <
\varepsilon$ and hence the finite decimal expression for $x \in \Rst$
which keeps the first $k$ digits of $x$ will provide an $\varepsilon$-approximation
to $x$.

Generally, the method of completion consists of several steps:
\begin{enumerate} \item
First one considers {\it all possible Cauchy-sequences} (for a
given metric);
\item Then one forms {\it equivalence classes} of Cauchy-sequences,
which are the objects of the new space;
\item The {\it distance of the new elements} can be defined
   as the limit of the distances between elements of the CS generating these
  equivalence classes (independent from
   the representatives of the classes);
\item Any element of the {\it original space defines a constant
sequence} which is obviously a CS. The claimed natural embedding
mapping assigns simply every element in the given space the
corresponding constant equivalence class;
\item Then one verifies that this {\it natural embedding} of the
original metric space into the new one {\it is isometric}, so that
henceforth the copy with the new object arising from the
original objects can be identified with the elements of the
original space\footnote{For the case of $\Qst \hookrightarrow \Rst$
we recognize the rational numbers among all the infinite decimal expressions
as those which are periodic, for a suitably chosen period.}.
\item If further (linear) structure is available, e.g.\ if we start from a  normed
space or a normed algebra, the resulting complete object is then
a Banach space or a Banach algebra, and most operators defined
on the original space can be naturally extended to the new setting. 
\end{enumerate}

\section{The Short-Time Fourier Transform STFT}

First we introduce the  {\it Short-Time Fourier Transform}
(STFT), or {\it Sliding Window Fourier Transform}  for continuous functions
$f$ and compactly supported {\it window} $g$, using simply Riemann integrals.
In the traditional approach to {\it Gabor Analysis} (see \cite{gr01})
one starts by assuming that both the signal  $f$ to be analyzed
and the window $g$ used for localization are in $\LtRd$,
 but from a practical viewpoint the
non-symmetric assumption appears to be more natural, e.g.\ well
localized windows allowing for the analysis of much more general
signals to be analyzed.

We start with $\CbRd$ as the signal space    , the set of
bounded, continuous, (real or ) complex-valued functions on $\Rdst$.
Since we do not require any decay or summability or decay conditions
on   $f$   we assume further that
the {\it window function}  $g \in \CcRd$, is a continuous (real-valued) function with
compact support (i.e.\ for some $R>0$ one has $g(x) = 0$ for $|x| \geq R$),
and is thus Riemann integrable, with
$$ \|g\|_1 := \intRd |g(x)|dx < \infty.$$
\begin{definition} \label{STFTCbRd}
Given $f \in \CbRd$ and  $g \in \CcRd$ one
defines the STFT of the signal/function $f$ with respect to the
window $g$ as the following function, whose argument are points
of the $2d$-dimensional phase space, namely pairs $(t,s)$, with
\footnote{With $t$ from the so-called time-domain
and $s$ from the frequency domain!} $t,s \in \Rdst$
\begin{equation} \label{defVgf1}
V_g(f)(t,s) := \intRd f(x) g(x-t) e^{- 2 \pi i s \cdot x} dx.
\end{equation}
\end{definition}
For the case $d=1$ the absolute value $|V_gf(t,s)|$
represents the frequency content of an
audio-signal at time $t$ and at frequency $s$, comparable
to a graphical score. A quite realistic idea of what it is
can be obtained by watching the web-page
{\tt www.gaborator.com}, where you can even upload your
own piece of music (in WAV-format).

It is easy to show that the STFT of a bounded and continuous
function is a bounded and continuous function of two variables:
\begin{lemma} \label{STFTbdcont}
Given $f \in \CbRd$ and $g \in \CcRd$. Then
$V_g(f) \in \Cbsp(\TFd)$ and
\begin{equation} \label{supVgest}
 \|V_g(f)\|_\infty \leq \|g\|_1 \|f \|_\infty.
\end{equation}
In other words, for any fixed $g \in \CcRd$ 
the mapping
$f \mapsto V_g(f)$ is a continuous embedding from
$\NSP \CbRd$ into $\NSP {\Cbsp(\Rtdst)}$.
\end{lemma}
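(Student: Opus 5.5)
The plan is to treat boundedness and continuity separately, working directly with the Riemann integral in (\ref{defVgf1}). Fix $g \in \CcRd$ with $g(x)=0$ for $|x| \geq R$. For each $(t,s)$ the integrand $x \mapsto f(x) g(x-t) e^{-2\pi i s\cdot x}$ is continuous and vanishes outside the compact ball $\{|x-t| \leq R\}$. Hence the integral exists as a Riemann integral over a compact set, and $V_g(f)$ is well defined on $\TFd$.

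The estimate (\ref{supVgest}) is immediate. Since $|e^{-2\pi i s\cdot x}|=1$, we have
$$|V_g(f)(t,s)| \leq \intRd |f(x)|\,|g(x-t)|\,dx \leq \|f\|_\infty \intRd |g(x-t)|\,dx = \|f\|_\infty \|g\|_1,$$
using translation invariance of the integral. Taking the supremum over $(t,s)$ gives (\ref{supVgest}). Because $V_g$ is linear in $f$, this estimate is exactly the statement that $f \mapsto V_g(f)$ is a bounded linear, hence continuous, map from $\NSP \CbRd$ into the bounded functions on $\Rtdst$, once continuity of $V_g(f)$ is shown. Injectivity (the word ``embedding'') does not need a separate argument here, since the lemma only asserts continuity of the linear map. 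If one wants injectivity as well, it would come later from an inversion formula.

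The main step is continuity of $(t,s) \mapsto V_g(f)(t,s)$. Fix $(t_0,s_0)$ and let $|t-t_0| \leq 1$ and $|s-s_0| \leq 1$. Then all integrands are supported in the fixed compact ball $K = \{|x-t_0| \leq R+1\}$. We split the difference:
$$V_g(f)(t,s)-V_g(f)(t_0,s_0) = \int_K f(x)\big(g(x-t)-g(x-t_0)\big)e^{-2\pi i s\cdot x}dx + \int_K f(x) g(x-t_0)\big(e^{-2\pi i s\cdot x}-e^{-2\pi i s_0\cdot x}\big)dx.$$
The first term is bounded by $\|f\|_\infty |K| \sup_x |g(x-t)-g(x-t_0)|$. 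This tends to $0$ as $t \to t_0$ because $g$, being continuous with compact support, is uniformly continuous. For the second term we use $|e^{-2\pi i s\cdot x}-e^{-2\pi i s_0\cdot x}| \leq 2\pi |s-s_0|\,|x|$, and on $K$ we have $|x| \leq |t_0|+R+1$. So this term is at most $2\pi(|t_0|+R+1)\|f\|_\infty\|g\|_1|s-s_0|$, which tends to $0$.

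The only delicate point is the uniform continuity of $g$ in the translation variable, which is what the compact support assumption supplies.
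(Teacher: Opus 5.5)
Your proof is correct and follows the paper's argument: the same pointwise bound; continuity in $t$ from the uniform continuity of $g$, with an estimate independent of $s$; and continuity in $s$ from the smallness of $e^{-2\pi i s\cdot x}-e^{-2\pi i s_0\cdot x}$ on a compact set containing the support of the integrand. Your version is slightly more careful in two respects: you combine the two estimates into joint continuity at $(t_0,s_0)$, and your frequency bound depends on $t_0$ as it must. The paper's $\delta$, chosen for $x\in\supp(g)$ independently of $t$, cannot be uniform in $t$ in general, as $f\equiv 1$ shows, where $V_g(f)(t,s)=e^{-2\pi i s\cdot t}\,\widehat{g}(s)$.
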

\begin{proof}
The obvious estimate of $V_g(f)(t,s)$ comes from
$$
|V_g(f)(t,s)| \leq \intRd |f(x-t)| |g(x)|dx \leq \fninf \gin.
$$
The (uniform) continuity in the time direction comes from
$$ |V_g(f)(t',s) - V_g(f)(t',s)| \leq \fninf \|g - T_{t-t'}g\|_1 < \eps$$
whenever $|t-t'| < \delta$, due to the uniform continuity of $g$.
Finally (also uniform) continuity in the frequency direction
comes in: Given $\epso$ one can find some $\delta > 0$ such that
$|s-s'| < \delta$ implies
$$  |e^{2 \pi i s \cdot x} - e^{2 \pi i s' \cdot x}| = |1 - e^{2 \pi i (s-s') \cdot
x}| < \eps $$
whenever $x \in \supp(g)$, which in turn implies
$$ |V_g(f)(t,s) - V_g(f)(t,s')|  \leq \fninf \gin \cdot \eps. $$
\end{proof}

It is our goal to show that in some sense the space $\SOPRd$ of
mild distributions on $\Rdst$ can be viewed as a {\it kind of completion}
of $\CbRd$. Alternatively it can be viewed as the ``largest natural
vector space of signals'' which have a bounded spectrogram.

In the rest of this paper we will show how to extend the domain
of the STFT to a larger space of signals, which we call ``mild distributions'',
making use of (mild) Cauchy sequences, but also
by verifying that this approach provides the user with just an
alternative approach to the Banach space $\SOPRdN$, which has
been introduced long ago and which has been used in a series
of papers over many years. It also constitutes the outer layer
of the so-called Banach Gelfand Triple $\SOLSOP$.

\section{The Usual Approach to $\SORdN$}

For mathematicians familiar with the theory of tempered
distributions as it is taught in many courses one can describe
the Banach Gelfand Triple $\SOGTrRd$, consisting of the
Segal algebra $\SORdN$, the Hilbert space $\LtRdN$ and the
dual space $\SOPRdN$ in the following way, using the
{\it extended STFT}, well defined for any $\sigma \in \ScPRd$
(consistent with the classical definition) via
\newpage
\begin{definition} \label{STFscpdef}
Given any non-zero (real-valued) $g \in \ScRd$ we define for $\siSOP$
$$  V_g(\sigma)(t,s) = \sigma( M_{-s} T_t g), \quad t,s \in \Rdst.$$
\end{definition}
Here we use the standard notations familiar from time-frequency
analysis, namely $[T_z g](x) = g(x-z)$ and $[M_s h](x) = e^{2 \pi i s \cdot x}
h(x)$,
where $s \cdot x = \langle s, x \rangle = \sum_{k=1}^d s_k x_k $ is the usual
scalar product in $\Rdst$. $M_s T_t$ is called a time-frequency shift operator.

Basic facts fir  the triple $\SOGTrRd$
read as follows (see \cite{fe81-2,gr01,cofelu08,ja18}):
\begin{theorem}
For fixed $ 0 \neq g \in \ScRd$ one has:
\begin{itemize}
  \item For $f \in \LtRd$ one has $V_g(f) \in \LtRtd$ and
   $$  \|V_g(f)\|_\LtRtd = \ltnorm{g} \ltnorm{f}
   \quad  f,g \in \LtRd.$$
  \item A function $f \in \LtRd$ belongs to $\SORd$ by
  definition, if $V_g(f) \in \Lisp(\Rtdst)$, and
  $$  \sonorm{f} :=  \intTFd |V_g(f)(t,s)| dt ds. $$
  Any function $f \in \SORd$ is continuous, bounded
  and absolutely Riemann-integrable. Thus $\SORdN \hookrightarrow \LqRdN, $
  for $1 \leq q \leq \infty$ and $\SORd \subset \CORd$. \vspace{2mm}
  \item A tempered distribution $\sigma \in \ScPRd$ belongs
   to $\SOPRd$ if and only if
  $$ \|V_g(\sigma)\|_\infty =\sup_{(t,s) \in \TFd}|V_g(\sigma)(t,s)| < \infty.$$
   Moreover, this expression defines an equivalent norm on $\SOPRdN$.
\end{itemize}
\end{theorem}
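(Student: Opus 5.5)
We prove the three items in turn, all for the fixed window $0 \neq g \in \ScRd$.

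\emph{First item: the orthogonality relation.} It suffices to treat $f \in \ScRd$ and then extend by density of $\ScRd$ in $\LtRd$, since the identity is isometric up to the factor $\ltnorm{g}$. For fixed $t$ the function $s \mapsto V_g(f)(t,s)$ is the Fourier transform of $x \mapsto f(x)g(x-t)$, and this product lies in $\LiRd\cap\LtRd$. Plancherel's theorem in $s$ therefore gives
$$\intRd |V_g(f)(t,s)|^2 ds = \intRd |f(x)|^2 |g(x-t)|^2 dx.$$
Integrating over $t$ and applying Fubini yields $\|V_g(f)\|_2^2 = \ltnorm{f}^2\ltnorm{g}^2$. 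This step is routine.

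\emph{Second item: properties of $\SORd$.} Here the content is the listed consequences of the definition, together with independence of the window up to equivalent norms. The main tool is the inversion formula
$$f = \ltnorm{g}^{-2}\intTFd V_g(f)(t,s)\, M_s T_t g \, dt\,ds,$$
which follows weakly from the first item via polarization.
\begin{itemize}
\item If $V_g f\in\Lisp(\Rtdst)$, the integral converges absolutely in $\|\ebbes\|_\infty$ and in $\|\ebbes\|_1$, because $\|M_sT_tg\|_\infty=\|g\|_\infty$ and $\|M_sT_tg\|_1=\|g\|_1$.
\item Hence $f$ is a Bochner integral of continuous, integrable, decaying functions. This gives $f\in \CORd\cap\LiRd$ with $\|f\|_q\le \ltnorm{g}^{-2}\|g\|_q\sonorm{f}$ for $1 \leq q \leq \infty$.
\item Absolute Riemann integrability follows from continuity together with a uniform tail estimate obtained from the same representation.
\end{itemize}
Independence of $g$ comes from the pointwise convolution inequality $|V_{h}f|\le \ltnorm{g}^{-2}\,|V_g f| * |V_{h} g|$ and Young's inequality, once one knows $V_h g\in\Lisp(\Rtdst)$ for Schwartz $g,h$. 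The latter holds because $V_hg$ is itself a Schwartz function on $\Rtdst$.

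\emph{Third item: characterization of $\SOPRd$.} We interpret $\SOPRd$ as the dual of $\SORdN$, with $\ScRd \hookrightarrow \SORd$ densely, so that $\SOPRd\subset\ScPRd$.
\begin{itemize}
\item \emph{Necessity.} If $\sigma\in\SOPRd$, then $|V_g\sigma(t,s)|\le \sopnorm{\sigma}\,\sonorm{M_{-s}T_tg}$. Moreover $\sonorm{M_{-s}T_tg}=\sonorm{g}$, because $|V_g(M_\omega T_z f)|$ is a phase-space translate of $|V_gf|$.
\item \emph{Sufficiency.} If $V_g\sigma$ is bounded, the inversion formula suggests defining
$$\langle\sigma,f\rangle := \ltnorm{g}^{-2}\intTFd V_g\sigma(t,s)\,\overline{V_g \bar f(t,s)}\,dt\,ds$$
for $f\in\SORd$, with the conjugations and signs adjusted. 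This gives $|\sigma(f)|\le \ltnorm{g}^{-2}\|V_g\sigma\|_\infty\sonorm{f}$, so $\sigma$ extends continuously to $\SORd$.
\end{itemize}

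The main obstacle is justifying the vector-valued inversion formula for $f\in\ScRd$ and $\sigma\in\ScPRd$, that is, showing that the formula above agrees with $\sigma(f)$. I would handle this by proving that the integral converges in the topology of $\ScRd$ for $f \in \ScRd$. This holds because $V_gf$ decays rapidly on $\Rtdst$ and the Schwartz seminorms of $M_sT_tg$ grow only polynomially in $(t,s)$. One may then apply $\sigma$ under the integral.

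Equivalence of the two norms is then immediate from the two inequalities above. Finally, independence of the choice of $g$ again follows from the convolution relation, now combined with $\Lisp * \Linsp\subset\Linsp$.
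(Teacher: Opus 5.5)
Your proposal is correct. Where the paper actually gives a proof, you take a genuinely different route.

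The paper gives no proof of the first two items and defers to the literature. Your arguments there supply what it omits, and they are the standard ones: Plancherel in $s$ plus Fubini, and the $\Lisp$-convergent inversion integral, which yields continuity, decay and the $\Lsp^q$ bounds.

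The third item is proved in the Lemma following the theorem. The necessity estimate $|V_g\sigma(t,s)|\le\|\sigma\|_{\SOPsp}\|g\|_{\SOsp}$ there is the same as yours. For sufficiency, however, the paper uses the discrete atomic decomposition (\ref{atomdec1}). It writes $f=\sum_n c_nM_{-s_n}T_{t_n}g$ with $\sum_n|c_n|\le C\|f\|_{\SOsp}$, applies $\sigma$ termwise, and obtains $|\sigma(f)|\le C\|f\|_{\SOsp}\|V_g\sigma\|_\infty$. You use instead the continuous resolution of the identity that comes from the orthogonality relations.

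Your route buys three things:
\begin{itemize}
\item It needs no atomic or Gabor-frame theorem for $\SOsp$.
\item It gives the explicit constant $\|g\|_2^{-2}$ in place of an unspecified $C$.
\item It confronts openly the one delicate step the paper passes over. Applying $\sigma\in\ScPRd$ termwise to a series known only to converge in $\SOsp$ presupposes the $\SOsp$-continuity that is being proved. The step is legitimate only if the coefficients are chosen with rapid decay, so that the series converges in $\ScRd$ (e.g.\ coefficients from a Gabor expansion with a Schwartz dual window). Your argument that $\int V_gf(t,s)\,M_sT_tg\,dt\,ds$ converges in $\ScRd$ settles this directly: $V_gf$ decays rapidly while the seminorms of $M_sT_tg$ grow only polynomially.
\end{itemize}

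The price is a vector-valued integral in the Fr\'echet space $\ScRd$ (or in the Banach completion for a single seminorm dominating $\sigma$), which is routine. The paper's discrete route, in turn, buys a direct link to its later results, where boundedness and pointwise convergence of the STFT are tested only on a lattice.

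Finally, both arguments rely on the density of $\ScRd$ in $\SORd$, which the paper only sketches later via (\ref{doubleApp1}). Your formula proves it at once. The synthesis map $F\mapsto\int F(t,s)\,M_sT_tg\,dt\,ds$ is bounded from $\Lisp(\Rtdst)$ into $\SORd$, with norm at most $\|V_gg\|_1$. It also maps Schwartz functions on $\Rtdst$ into $\ScRd$. Hence approximating $V_gf$ in $\Lisp(\Rtdst)$ by Schwartz functions approximates $f$ in $\SORd$.
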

The (continuous) embedding of $\LpRdN$ (for any $p \in [1,\infty]$) into $\SOPRdN$
is realized via
\begin{equation} \label{embCbRd1}
 h \mapsto \sigma_h(f): = \intRd  f(x) h(x) dx, \quad \fSORd.
\end{equation}

Aside from the norm convergence in $\SOPRdN$ (resp.\ uniform convergence of
the corresponding STFTs $V_g(\sigma_n)$ it is important to discuss the so-called
$\wst$-convergence of sequences\footnote{Due to the separability of $\SORdN$ it
is enough to use sequences, while in general one should use {\it nets} for
a description of the $\wst$-topology!}:
\begin{definition} \label{wstsequ1}
A sequence
$\seqg{\sigma}$ is $\wst$-convergent in $\SOPRd$ to $\sigma_0$ if and only if
\begin{equation}\label{wstconvseq1}
  \limninf \sigma_n(f) = \sigma_0(f), \quad \fafSORd.
\end{equation}
\end{definition}
A linear mapping between two subspaces of $\SOPRd$ is $\wwst$continuous
if $\wst$-convergent sequences are mapped into $\wst$-convergent sequences.

As  illustration let us give a few examples as they
appear   in (engineering) books on Fourier Analysis, involving
typically some ``hand-waving'' argument:
\newpage
\begin{enumerate}
  \item Dirac sequences, obtained by compression of an integrable function;
  $$ \wstlim_{\rho \to 0}  \, \Strho g = \left (\intR g(x)dx\right ) \delta_0 = \hat{g}(0) \delta_0,
  $$
  where $[\Strho g](x) = \rho^{-d} g(x/\rho)$, usually with $\hat{g}(0)= 1$.
  \item Riemannian sums converging to the integral, e.g.\ for $f \in \SORd$:
  $$ \lim_{\alpha \to 0} \langle  \alpha^d \sum_{k \in \Zdst} {\delta_{\alpha
  k}}, f \rangle =
   \lim_{\alpha \to 0}  \, \alpha^d \sum_{k \in \Zdst} f(\alpha k)   = \intRd
   f(x)dx =
  \langle {\bf 1}, f\rangle.$$
  \item  For any $f \in \LiRd$ the periodic versions of that  function converge
      to the    original function (this is often used to motivate the form of the
       specific form of the Fourier integral for non-periodic functions):
  $$ \wstlim_{p \to \infty}  \sum_{k \in \Zdst}  T_{pk} f =  f. $$
\end{enumerate}

\section{Mild Cauchy Sequences} 

We will take the space $\CbRd$ as a starting point, a vector space
of ``decent signals'', where the usual vector space operations (addition,
linear combinations etc.) are well defined. It is a normed
vector space with respect to ordinary addition and scalar multiplication
of continuous functions. The expression
\begin{equation} \label{supnormdef}
 \|f\|_\infty := \suprRd |f(r)|
\end{equation}
defines the natural norm on this space, turning $\CbRdN$ into a
Banach space, in fact, it is even a Banach algebra with respect to
pointwise multiplication, since
\begin{equation}\label{supsubmult}
\supnorm{f\cdot g} \leq \supnorm{f} \cdot \supnorm{g},\quad f,g \in\CbRd.
\end{equation}
Occasionally we will need $\CORdN$, the closed ideal of function in $\CbRd$
which vanish at infinity, i.e.\ satisfy
\begin{equation}\label{COdef001}
  \lim_{|x| \to \infty} |f(x)| = 0.
\end{equation}
The Riesz Representation Theorem justifies to simply identify the dual space
of $\CORdN$ with the space $\MbRdN$ (of bounded, regular Borel measures).
The continuity of such a functional in the form
$$ \mu:  f \mapsto  \intRd f(x) d\mu(x), $$
 corresponds to the usual $\sigma$-additivity used in measure theory, and the
 functional norm is the same as the total variation (see \cite{fe16}).

Since we are interested in extending the concept of an STFT in an
{\it elementary way} from $\CbRd$ to some larger space
( yet to be defined\footnote{The extension of the
 field  $\Qst$ of rational numbers to the
field $\Rst$ of real numbers is not only enlarging the set of objects which
can be considered, but it also preserves to the extent possible all the
properties and axioms known to be valid for $\Qst$,
like addition, multiplication, inversion and so on, which are designed
in a way compatible with the original structure.})
we first look at a variant of the concept of a Cauchy sequence which
appears to be appropriate in the current context.
\begin{definition} \label{defmildCauchy}
Fix any non-zero $g \in \CcRd$ with $\hat{g} \in \LiRd$. 
A sequence $(h_n)_{n\geq 1}$ in $\CbRd$ is called a {\it mild Cauchy sequence}
if 
the sequence $\|V_g(h_n)\|_\infty$ is bounded and
\begin{equation}\label{mildCSdef}
  \left( V_g(h_n)(s,t) \right)_{n \geq 1}
\end{equation}
is a Cauchy-sequence (with respect to $n$) for every pair $(t,s) \in \TFd$.
\end{definition}
Since the field $\Cst$ of complex numbers is itself complete this is of
course equivalent to the assumption that the limit of such a sequence
exists, i.e., for each $(t,s) \in \TFd$ one has a pointwise limit
\begin{equation}\label{cmildCSlim}
  \exists \,  H(s,t) = \limninf V_g(h_n)(s,t)
\end{equation}
and of course $H$ is then a bounded function with
\begin{equation}\label{cmildLIM}
  \supnorm{H} \leq \sup_{n \geq 1} \|V_g(h_n)\|_\infty.
\end{equation}

\begin{remark}
One can show that the definition of a mild Cauchy sequence
does {\it not depend} on the a particular choice of $g$.
 In fact, even a slightly stronger claim is true:
 if condition (\ref{mildCSdef})
is valid for one nonzero function $g_1 \in \SORd$ it is also true
for any other function $g_2 \in \SORd$. This follows from
the atomic characterization of $\SORdN$, a kind of {\it exchange principle},
(see \cite{fe81-2,ja18}.
\end{remark}

We define {\it equivalence of Cauchy-sequences}
 in a rather obvious way:
\begin{definition}  \label{mildCSequ}
Two mild Cauchy-sequences $(h_n^{(1)})$ and $(h_k^{(2)})$
are called {\it mildly equivalent}  if they have the same limit, i.e.\
\begin{equation}
 \limninf V_g(h_n^{(1)})(s,t) =   \lim_{k \to \infty} V_g(h_k^{(2)})(s,t),
 \quad \forall (t,s) \in \TFd.
\end{equation}
\end{definition}

We also define a norm on the vector space of equivalence classes of
mild Cauchy-sequences (short: ECmiCS):
\begin{definition} \label{CSnorm2}
For a given ECmiCS ${\bf F}$  we define its norm:
\begin{equation} \label{CSnorm}
\|{\bf F}\|_{CS} = \inf \{ \sup_{n \geq 1} \|V_g(h_n)\|_\infty \},
\end{equation}
where the infimum is taken over all representatives of the
equivalence class ${\bf F}$.
\end{definition}
It is then not difficult (but lengthly) to verify that
this is actually a norm on the vector space of equivalence
classes, and that $\CbRdN$ is continuously embedded into
this space, see Cor. \ref{completeness}. 
We will observe later that $\CbRd$ is {\it not dense}
in the new space with respect to this norm, but only
in the natural topology (corresponding to the
$\wst$-convergence in $\SOPRd$).
What is less obvious (but a valid claim) is the
{\it completeness of this new space}: every mild Cauchy
sequence has a limit.

In order to establish appropriate terminology for discussion
and later reference let us introduce the acronym {\bf ECmiCS}
for an {\it {\bf E}quivalence {\bf C}lass
if {\bf mi}ld {\bf C}auchy {\bf S}equences}. These ECmiCS constitute
the new (enlarged) vector space  of objects,  in fact a normed
space with respect to the $CS$-norm. For the rest of this
note it will be convenient to use the symbol ${\bf F}$ for
such an equivalence class, hence $ \|{\bf F}\|_{CS}$ describes
the norm of an ECmiCS.

Using this terminology our observations reduce to the following
statement:
\begin{lemma} \label{CbECmiCS1}
 The space $\CbRdN$ is continuously embedded into the space
 of $ECmiCS$, endowed with the $CS$-norm.
\end{lemma}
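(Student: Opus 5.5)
The natural map sends $f \in \CbRd$ to the equivalence class ${\bf F}_f$ of the constant sequence $h_n = f$, $n \geq 1$. The plan is to check, in order, that this map is well defined, that it is linear, that it is injective, and that it is bounded from $\CbRdN$ into the ECmiCS with the $CS$-norm.

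\textbf{Well-definedness.} By Lemma \ref{STFTbdcont}, $V_g(f)$ is a bounded continuous function with $\|V_g(f)\|_\infty \leq \gin \fninf$. For the constant sequence, the pointwise sequences $V_g(h_n)(t,s)$ are constant in $n$, hence Cauchy. So the constant sequence is a mild Cauchy sequence in the sense of Definition \ref{defmildCauchy}, and its pointwise limit is $H = V_g(f)$.

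\textbf{Linearity.} The STFT is linear in $f$. Element-wise linear combinations of mild Cauchy sequences are again mild Cauchy, with limit the corresponding combination of the limits. Mild equivalence (Definition \ref{mildCSequ}) is expressed through these limits, so the vector space operations on ECmiCS are well defined and $f \mapsto {\bf F}_f$ is linear.

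\textbf{Norm estimate.} The constant sequence is one admissible representative in the infimum of Definition \ref{CSnorm2}. Therefore
$$ \|{\bf F}_f\|_{CS} \leq \sup_{n\geq 1}\|V_g(f)\|_\infty = \|V_g(f)\|_\infty \leq \gin \fninf , $$
which is exactly continuity of the linear map, with constant $\gin$.

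\textbf{Injectivity (main obstacle).} If ${\bf F}_f = 0$, then the limit function is zero, i.e.\ $V_g(f)(t,s) = 0$ for all $(t,s)$. For fixed $t$, $s \mapsto V_g(f)(t,s)$ is the Fourier transform of the compactly supported continuous function $x \mapsto f(x)\, g(x-t)$. Uniqueness of the Fourier transform for integrable functions then forces $f \cdot T_t g \equiv 0$ for every $t$. Since $g \neq 0$ is continuous, there is an open set $U$ with $g \neq 0$ on $U$. Translates $t + U$ cover $\Rdst$, so $f$ vanishes everywhere.

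The only non-trivial ingredient is the Fourier uniqueness step. In the paper's elementary spirit, I would derive it from the inversion formula using $\hat g \in \LiRd$. Alternatively, I would argue via the reconstruction formula
$$ f = \|g\|_2^{-2} \int V_g f(t,s)\, M_s T_t g \,dt\,ds $$
understood weakly, which forces $f = 0$ when $V_g f = 0$.
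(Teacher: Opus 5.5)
Your proof is correct. The boundedness part is exactly what the paper intends: the constant sequence is an admissible representative, and Lemma~\ref{STFTbdcont} then gives $\|{\bf F}_f\|_{CS}\leq \gin\fninf$. You differ from the paper only in the injectivity step, which is the one point the paper actually argues.

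The paper regards $h\in\CbRd$ as the functional $\sigma_h(f)=\intRd f(x)h(x)\,dx$ on $\SORd$. It notes that $V_g(h)\equiv 0$ means $\sigma_h$ annihilates all time-frequency shifts of $g$. It then uses the atomic decomposition (\ref{atomdec1}) to conclude $\sigma_h(f)=0$ for every $f\in\SORd$, hence $h=0$. That last step uses the easy but unstated fact that $\SORd$ separates the points of $\CbRd$.

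You instead fix $t$ and read $s\mapsto V_g(f)(t,s)$ as the Fourier transform of the compactly supported continuous function $f\cdot T_t g$. You then invoke uniqueness of the Fourier transform on $\LiRd$.

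\textbf{What your route buys.} It is more elementary and more general. It needs neither the atomic characterization of $\SORd$ nor the hypothesis $\hat g\in\LiRd$; in the paper that hypothesis is what guarantees $g\in\SORd$, so that (\ref{atomdec1}) applies. Your argument works for any nonzero $g\in\CcRd$.

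\textbf{What the paper's route buys.} It uses the same mechanism that drives Theorem~\ref{Cauchydual}: identification of $h$ with an element of $\SOPRd$ and uniqueness of the STFT there. So it connects directly to the later functional-analytic identification.

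\textbf{Two small corrections to your closing remarks.} First, the hypothesis $\hat g\in\LiRd$ plays no role in your uniqueness step. Since $\widehat{f\cdot T_t g}\equiv 0$ is trivially integrable, the $\LiRd$ inversion theorem applies directly. Second, the weak reconstruction formula is not a more elementary alternative. For $f\in\CbRd\setminus\LtRd$ it has to be interpreted against test functions, which puts you back in the $\SOPRd$ setting the paper uses anyway.
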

One only has to check (and this is quite easy) that if $h \in \CbRd$
represents the zero ECmiCS that it has to be the zero-function, but
this is quite clear because it implies that
$ \intRd f(x) h(x)dx = 0$ for any $f \in \SORd$,   based on the
atomic characterization of $\SORdN$ (see (\ref{atomdec1})).

What is perhaps more interesting is the fact that one can extend
more or less all the usual manipulations to this {\it enlarged
space}, by just applying it to the individual mild Cauchy sequences,
and by verifying that they are compatible with the introduced
equivalence relation.

Above all we have translation, modulation, dilations, which can
be easily transferred to the extended space. The Fourier transform
is a bit more tricky. 

The decisive formula is (3.10) in \cite{gr01}, p.40, also called
{\it fundamental formula of time-frequency analysis} there.
\begin{equation}  \label{Vgfour1}
V_{\ghat}(\fhat)(\omega, -t) = e^{2 \pi i t \cdot \omega} \, V_g(f)(t,\omega),
\quad t,\omega \in \Rdst.
\end{equation}
For the standard choice $g = g_0$ (Gaussian window) this implies that the STFT of
$\hatf$ is (up to some harmless phase factors) just the (absolute value of the)
original STFT of $f$, rotated by $90$ degrees in phase space,
thus implying the isometric invariance of $\SORdN$ under the
Fourier transform.
Since at this point we do not have a Fourier transform
for $h \in \CbRd$  we cannot   use of this formula yet.


\section{The Functional Analytic Viewpoint}

In this section we will demonstrate that the approach chosen
in the section above is in fact equivalent to the one used in
various publications on the subject so far, or the approach
to the subject possible in the context of tempered distributions.
Of course, it is necessary to make use of result from
standard linear {\it functional analysis} in order to carry
out these identifications in a mathematically correct way.

Let us first recall in some more detail the characterization of
$\SOPRdN$ (as e.g.\ introduced in \cite{cofelu08}) as a
subspace of tempered distributions $\ScPRd$:
\begin{lemma}
The space $\SOPRd$ coincides with the space of all tempered
distributions with a bounded STFT (with respect to any
non-zero Schwartz function $g$):
$$ V_g(\sigma)(t,s) = \sigma(M_{-s} T_t g), \quad t,s \in \Rdst.$$
\end{lemma}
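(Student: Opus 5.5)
The plan is to prove both inclusions directly, using the STFT-based definition of $\SORd$ from the preceding theorem, $\sonorm{f} = \|V_g(f)\|_{\Lisp(\Rtdst)}$ for a fixed real-valued $0 \neq g \in \ScRd$, together with the continuous inclusion $\ScRd \hookrightarrow \SORd$. For that inclusion: if $f \in \ScRd$, then $V_g(f)$ is a Schwartz function on $\Rtdst$, hence integrable. Its $\Lisp$-norm is bounded by finitely many Schwartz seminorms of $f$, so the inclusion is continuous. I will also use that $\ScRd$ is dense in $\SORdN$, which I take as part of the standard theory (\cite{fe81-2,ja18}). With these two facts, restriction $\sigma \mapsto \sigma|_{\ScRd}$ maps $\SOPRd$ injectively into $\ScPRd$. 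This identifies $\SOPRd$ with a subspace of tempered distributions, and the extended STFT of Definition \ref{STFscpdef} is consistent on it.

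\emph{Inclusion $\SOPRd \subseteq \{\sigma : V_g(\sigma) \text{ bounded}\}$.} Let $\sigma \in \SOPRd$. Time-frequency shifts act isometrically on $\SORdN$, because $|V_g(M_{-s}T_t h)|$ is just a translate of $|V_g(h)|$ in phase space, and translation does not change the $\Lisp(\Rtdst)$-norm. Hence
$$|V_g(\sigma)(t,s)| = |\sigma(M_{-s}T_t g)| \leq \sopnorm{\sigma}\, \sonorm{M_{-s}T_tg} = \sopnorm{\sigma}\, \sonorm{g}.$$
This gives $\|V_g(\sigma)\|_\infty \leq \sonorm{g}\, \sopnorm{\sigma}$.

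\emph{Inclusion $\{\sigma : V_g(\sigma) \text{ bounded}\} \subseteq \SOPRd$.} Let $\sigma \in \ScPRd$ with $\|V_g(\sigma)\|_\infty < \infty$. For $f \in \ScRd$ I will use the inversion formula of the STFT (the weak form follows from the isometry $\|V_g(f)\|_2 = \ltnorm{g}\ltnorm{f}$ by polarization). Adjusting the conjugation and sign conventions so that it matches the bilinear pairing, it reads
$$ f = \ltnorm{g}^{-2} \intTFd V_g(f)(t,-s)\, M_{-s}T_t g \, dt\, ds. $$
Applying $\sigma$ inside the integral then gives
$$|\sigma(f)| \leq \ltnorm{g}^{-2}\, \|V_g(\sigma)\|_\infty \, \|V_g(f)\|_1 = C\, \sonorm{f}.$$
So $\sigma$ is $\SOsp$-continuous on the dense subspace $\ScRd$, and it extends uniquely to an element of $\SOPRd$ with $\sopnorm{\sigma} \leq \ltnorm{g}^{-2}\|V_g(\sigma)\|_\infty$. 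Combined with the first inclusion, this also shows that $\|V_g(\sigma)\|_\infty$ is an equivalent norm on $\SOPRd$.

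The main obstacle is justifying the interchange of $\sigma$ with the integral. I will show that the integral converges in the Fréchet topology of $\ScRd$, not merely weakly in $\LtRd$. The point is that each Schwartz seminorm of $M_{-s}T_tg$ grows only polynomially in $(t,s)$, while $V_g(f)$ decays faster than any polynomial on $\Rtdst$. Hence the integral converges absolutely with respect to every seminorm, and the continuous functional $\sigma$ commutes with it. Its value in $\ScRd$ must equal $f$, since the weak $\LtRd$ inversion formula identifies it.
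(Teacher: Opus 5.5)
Your proof is correct. The first inclusion matches the paper's estimate (\ref{unifestimSOP}), but your converse takes a genuinely different route.

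\emph{The paper's route.} It bounds $|\sigma(f)|$ through a discrete atomic decomposition $f = \sum_n c_n M_{-s_n}T_{t_n} g$ with $\sum_n |c_n| \leq C \sonorm{f}$, and applies $\sigma$ term by term. Only the samples $V_g(\sigma)(t_n,s_n)$ then enter the estimate.

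\emph{Your route.} You use the continuous reconstruction formula obtained from the orthogonality relations. This needs no atomic or frame theory. It also gives the explicit constant $\ltnorm{g}^{-2}$ when the $\SOsp$-norm is computed with the same window.

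\emph{Rigor at the delicate step.} Your approach is more careful exactly where care is needed. The paper passes the tempered distribution $\sigma$ through an infinite series without comment. Strictly, that requires the series to converge in $\ScRd$, since $\SOsp$-continuity of $\sigma$ is what is being proved and cannot be used. Such convergence holds, for example, for Gabor expansions with a Schwartz dual window, where the coefficients decay rapidly. It is not automatic for an arbitrary $\ell^1$-atomic decomposition. Your observation settles the corresponding point for the integral: the Schwartz seminorms of $M_{-s}T_t g$ grow only polynomially in $(t,s)$, while $V_g(f)$ decays rapidly. So the $\ScRd$-valued integral converges absolutely, and the interchange with $\sigma$ is justified.

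\emph{What the paper's version buys.} The discrete argument connects directly to the sampled characterization in the Gabor section. There, boundedness and pointwise convergence of the STFT only need to be checked on a lattice.
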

\begin{proof}
First of all let us remind of the fact that $\ScRd$ is dense in
$\SORdN$. Hence $\SOPRd$ is a subspace of $\ScPRd$ (continuously embedded).
A tempered distribution  $\sigma$ defines an element of $\SOPRd$ if
and only if it is bounded on $\ScRd$, but with respect to the norm
of $\SORdN$.

If one has such a bounded linear functional  one has
for any $g \in \SORd$
\begin{equation} \label{unifestimSOP}
\sup_{(t,s) \in \TFd} |V_g(\sigma)(t,s)|
   \leq \sup_{(t,s) \in \TFd}   \siSOPN  \sonorm{M_{-s} T_t g } =
     \siSOPN \gSON.
\end{equation}
Conversely assume that  
$\sigma \in \ScPRd$ is given with bounded STFT (for some
non-zero Schwartz function $g$).
Then one has, thanks to the {\it atomic representation}
of a generic element  $f \in \SORdN$ as
\begin{equation} \label{atomdec1}
 f = \sumninf c_n M_{-s_n} T_{t_n} g \quad \mbox{with} \,\,\,
 \sumninf |c_n| \leq C \fSON,
\end{equation}
for some constant $C > 0$ the following estimate for any $f \in \ScRd
\subset \SORd$:
$$  |\sigma(f)| \leq \sumninf |c_n| \sigma( M_{-s_n} T_{t_n} g  ) $$
and further the boundedness of $\sigma$ on $\SORdN$ via
$$  |\sigma(f)| \leq \sumninf |c_n| |V_g(\sigma)(t_n,s_n)|
 \leq C \fSON \|V_g(\sigma)\|_\infty.
 $$
Altogether we have established the equivalence of
$\| V_g(\sigma)\|_\infty $ and the standard dual norm on $\SOPRdN$,
given by $ \siSOPN := \sup_{\fSON \leq 1} |\sigma(f)|$.
\end{proof}

Since the following result will immediately provide a number of properties
 of the our ``completion'' of $\CbRd$ we want to give it next, in order
 to avoid elementary, but cumbersome arguments (in the spirit of
 sequential approaches to generalized functions, as promoted by
 Lighthill \cite{li58}).

\newpage
This is one of our main results:
\begin{theorem} \label{Cauchydual}
There is a natural identification of $\SOPRdN$ with ECmiCS,  the
normed space of equivalence classes of mild Cauchy sequences in $\CbRd$,
with equivalence of norms.
\end{theorem}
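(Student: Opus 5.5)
We construct a map $\Phi$ from ECmiCS to $\SOPRd$ and an inverse-type map $\Psi$ from $\SOPRd$ to ECmiCS, and check that both are well defined, linear, bounded, and mutually inverse.

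\emph{From ECmiCS to $\SOPRd$.} Let $(h_n)$ be a mild Cauchy sequence with $\sup_n \|V_g(h_n)\|_\infty = K$. Each $h_n \in \CbRd$ defines $\sigma_{h_n} \in \SOPRd$ via (\ref{embCbRd1}). By the lemma characterizing $\SOPRd$ through bounded STFTs (the atomic estimate (\ref{atomdec1})), $\|\sigma_{h_n}\|_\SOPsp \leq C K$. For an atom $M_{-s}T_t g$ we have
$$\sigma_{h_n}(M_{-s}T_t g) = V_g(h_n)(t,s),$$
which converges by assumption. Take a general $f = \sum_k c_k M_{-s_k}T_{t_k} g$ with $\sum_k |c_k| \leq C\|f\|_\SOsp$. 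Since the terms $V_g(h_n)(t_k,s_k)$ are uniformly bounded by $K$, dominated convergence in $\ell^1$ shows that $\sigma_{h_n}(f)$ converges. Its limit equals $\sum_k c_k H(t_k,s_k)$ and is independent of the chosen atomic decomposition, because it is a limit of $\sigma_{h_n}(f)$. So $\sigma_{h_n}$ is $\wst$-convergent to some $\sigma$ with $\|\sigma\|_\SOPsp \leq CK$ and $V_g(\sigma) = H$.

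Equivalent sequences have the same $H$, hence the same $\sigma$, since $\sigma$ is determined by its values on atoms. Therefore $\Phi(\mathbf F) := \sigma$ is well defined and linear. Taking the infimum over representatives gives $\|\Phi(\mathbf F)\|_\SOPsp \leq C\|\mathbf F\|_{CS}$. Injectivity follows because $\Phi(\mathbf F) = 0$ forces $H = V_g(\sigma) = 0$, i.e.\ $\mathbf F$ is the zero class.

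\emph{From $\SOPRd$ to ECmiCS.} This is the main obstacle: given $\sigma \in \SOPRd$, we must produce $h_n \in \CbRd$ with $\sigma_{h_n} \to \sigma$ in the $\wst$ sense and $\sup_n \|V_g(h_n)\|_\infty \leq C'\|\sigma\|_\SOPsp$, with no loss in the norm bound. The plan is a product-convolution regularization:
$$h_n := \Drho_n \cdot (\sigma * \Strho_n \varphi) \quad\text{or}\quad h_n := (\psi(\cdot/n)\,\sigma) * \Strho_{1/n}\varphi,$$
with $\varphi, \psi \in \SORd$, $\hat\varphi(0)=1$, $\psi(0)=1$. Here multiplication by $\psi(\cdot/n)$ and convolution by $\Strho_{1/n}\varphi$ act boundedly on $\SOPRd$, with operator norms uniformly bounded in $n$; this follows from $\SORd$ being a Banach module under pointwise multiplication and convolution with dilation-bounded constants, and the Fourier invariance handles the frequency side. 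The result is a bounded continuous function, in fact an element of $\SORd$. Moreover
$$\sigma_{h_n}(f) = \sigma(A_n^* f),$$
where $A_n^* f \to f$ in $\SORdN$ for every $f$, so $\sigma_{h_n} \to \sigma$ in the $\wst$ sense. Pointwise convergence of $V_g(h_n) = \sigma_{h_n}(M_{-s}T_t g)$ is then immediate, and the uniform bound gives $\sup_n \|V_g(h_n)\|_\infty \leq \|g\|_\SOsp \sup_n \|\sigma_{h_n}\|_\SOPsp \leq C''\|\sigma\|_\SOPsp$. Hence $\Psi(\sigma) := [(h_n)]$ is a mild Cauchy sequence with $\Phi\Psi = \mathrm{id}$ and $\|\Psi\sigma\|_{CS} \leq C''\|\sigma\|_\SOPsp$. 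Combined with injectivity of $\Phi$, this shows $\Phi$ is a bijection with equivalent norms.

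Along the way one must also check that $\|\cdot\|_{CS}$ is a norm and that $\CbRd$ embeds via constant sequences; both follow from this identification, since a constant sequence $h$ is sent to $\sigma_h$.
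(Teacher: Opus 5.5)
Your proof is correct and follows essentially the paper's route. You embed each $h_n$ into $\SOPRd$, get a uniform bound and extend convergence from the atoms $M_{-s}T_tg$ to all of $\SORd$ via the atomic decomposition, identify classes through uniqueness of the STFT, and obtain surjectivity from the product--convolution regularization $(D_{\rho_n}\psi)\cdot(\sigma*\mathrm{St}_{\rho_n}\varphi)$ with $A_n^*f\to f$ in $\SORd$. The differences are only cosmetic: you use dominated convergence on the full atomic series where the paper uses a finite-sum approximation and an $\varepsilon/3$ argument, and you fold the regularization step (which the paper carries out after the corollary) and the injectivity check into the proof itself.
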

\begin{proof}
Given any equivalence class of mild Cauchy-sequence ${\bf F}$  and
$\epso$ let us choose as {\it mild CS} $(f_n)_{n \geq 1}$ in $\CbRd$ with
$$ \supngi  \|V_g(f_n)\|_\infty \leq  \|{\bf F}\|_{CS} + \eps. $$

Then the sequence $(f_n)_{n \geq 1}$ can be viewed as a bounded
sequence $(\sigma_n)_{n \geq 1}$ in $\SOPRdN$, via
$$ \sigma_n(f) = \intRd f(x) f_n(x) dx, \quad f \in \SORd.$$
In fact,
$$ \supngi \|\sigma_n\|_\SOPsp \leq  \|{\bf F}\|_{CS} + \eps
  \leq  2 \|{\bf F}\|_{CS}.$$

By the atomic decomposition formula (\ref{atomdec1})
this sequence is uniformly bounded in $\SOPRd$. Therefore
it is enough to show that $(\sigma_n(f))_{n \geq 1}$
is a Cauchy-sequence for any $f \in \ScRd$ (the existence
of the limit follows then automatically). 
Given $\epso$ and $f \in \SORd$ let us choose a finite
sum of the form $ h = \sum_{n=1}^K c_n  M_{s_n} T_{t_n} g $
\begin{equation}
  \sigma_r(M_{s_n}T_{t_n} g) = V_g (\sigma_r)(t_n,s_n).
\end{equation}
with
$$\|f - h\|_\SOsp < \eps/(10 \|{\bf F}\|_{CS}),$$
 and of course
$\sum_{n \geq 1} |c_n|  \leq C \|f\|_\SOsp.$

Given the finite sequence $(t_n,s_n)_{n=1}^K$ in $\TFd$
the pointwise Cauchy-condition implies that we  can
find an index $n_1 \in \Nst$ such that one has for $r_1,r_2 \geq n_1$
\begin{equation}  \label{finconv1}
|V_g(f_{r_1})(t_n,s_n) - V_g(f_{r_2})(t_n,s_n)| < \eps \cdot (2C \fSON)^{-1},
\quad\mbox{for} 1 \leq n \leq K.
\end{equation}
Consequently one has
\begin{equation}
 |\sigma_{r_1}(h) - \sigma_{r_2}(h)| \leq
\sum_{n=1}^K |c_n|  |V_g(f_{r_1})(t_n,s_n) - V_g(f_{r_2})(t_n,s_n)|
\leq  \eps/2.
\end{equation}
This implies for  $r_1,r_2 \geq n_1$
\begin{equation}
|\sigma_{r_1}(f) - \sigma_{r_2}(f)| \leq
|\sigma_{r_1}(f-h)| +  \eps/2  +  |\sigma_{r_2}(h-f)| \leq
\end{equation}
\begin{equation}
\leq  2 \sup_{n \geq 1} \|\sigma_n\|_\SOsp \|f - h\|_\SOsp + \eps/2
\end{equation}
\begin{equation}  \leq   2/5  \eps + \eps/2 < \eps. \end{equation}

This shows that there exists some $\sigma$ with
$V_g(\sigma)$ being the pointwise limit of the
mild distributions $(\sigma_n)$ and thus
$$ \|\sigma\|_\SOPsp   = \sup |V_g(\sigma)(t,s)| \leq \|F\|_{CS} + \eps,$$
but since this claim is valid for any $\epso$ we have
$$ \siSOPN \leq \|{\bf F}\|_{CS}. $$

It remains to mention (details are left to the interested reader)
that the assignment $F \mapsto \sigma$ is in fact well defined,
i.e.\ it is in fact a mapping from the {\it equivalence}
class ${\bf F}$, not depending on the representative used. But this
follows from the uniqueness of the STFT: Given $\sigma_1,\sigma_2
\in \SOPRd$ with $V_g(\sigma_1) = V_g(\sigma_2)$ one has of course
$\sigma_1 = \sigma_2$.
\end{proof}
Since any dual of a normed space is a complete, normed space,
i.e.\ a Banach space, we have as an immediate consequence of the
isomorphism just stated:
\begin{corollary} \label{completeness}
The space of equivalence classes of mild Cauchy sequences
in $\CbRd$ with its natural norm 
is a Banach space. The embedding of $\CbRdN$ into
this space via constant sequences, i.e.\ by $f \mapsto
(f_n)$, with $f_n = f$ for all $n \geq 1$, is continuous,
with dense range (in the $\wst$-sense).
\end{corollary}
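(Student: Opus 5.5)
The plan is to transport everything through the identification of Theorem \ref{Cauchydual}. Call that map $\Phi$: it sends an ECmiCS ${\bf F}$ to the mild distribution $\sigma \in \SOPRd$ whose STFT is the pointwise limit $H$ of $V_g(h_n)$. The corollary has three parts: completeness, continuity of the embedding of $\CbRdN$, and $\wst$-density of its range. I will prove them in that order.

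\emph{Completeness.} First I will check that $\Phi$ is a linear bijection with $\|\Phi({\bf F})\|_\SOPsp \asymp \|{\bf F}\|_{CS}$.
\begin{itemize}
\item Linearity holds because pointwise limits of STFTs add.
\item Injectivity follows from uniqueness of the STFT: $V_g(\sigma)=0$ forces $\sigma = 0$, and then any two representatives have the same pointwise limit, so they are mildly equivalent.
\item For surjectivity, I take $\sigma \in \SOPRd$ and produce a mild Cauchy sequence in $\CbRd$ converging to it. I intend to use the regularization $h_n = (\sigma * \Strho \psi)\cdot \Drho$-type cutoffs, concretely $h_n := \psi(\cdot/n)\,(\sigma * \mathrm{St}_{1/n}\varphi)$ with $\varphi,\psi \in \ScRd$. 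These $h_n$ are bounded and continuous, $\sup_n \|h_n\|_\SOPsp \le C\|\sigma\|_\SOPsp$, and $h_n \to \sigma$ in the $\wst$-sense. Then $V_g(h_n)(t,s) = \sigma_{h_n}(M_{-s}T_t g) \to V_g(\sigma)(t,s)$ pointwise, and $\sup_n\|V_g(h_n)\|_\infty \le C'\|\sigma\|_\SOPsp$.
\end{itemize}
This yields the lower norm bound $\|{\bf F}\|_{CS} \le C'\|\Phi({\bf F})\|_\SOPsp$, which complements the inequality $\siSOPN \le \|{\bf F}\|_{CS}$ proved in Theorem \ref{Cauchydual}. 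Hence $\Phi$ is an isomorphism of normed spaces. Since $\SOPRd$ is the dual of the normed space $\SORdN$, it is complete, and completeness transfers through an isomorphism with equivalent norms.

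\emph{Continuity of the embedding.} For $f \in \CbRd$ the constant sequence is trivially a mild Cauchy sequence, and Lemma \ref{STFTbdcont} gives
$$\|[f]\|_{CS} \le \|V_g(f)\|_\infty \le \gin \fninf .$$
Injectivity of the embedding is Lemma \ref{CbECmiCS1}.

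\emph{$\wst$-density of the range.} Transported via $\Phi$, I must show that every $\sigma \in \SOPRd$ is a $\wst$-limit of functions $\sigma_{h_n}$ with $h_n \in \CbRd$. The sequence constructed in the surjectivity step does exactly this. Alternatively, I can argue directly: if $(h_n)$ is any representative of ${\bf F}$, then the constant classes $[h_n]$ converge to ${\bf F}$ pointwise on STFTs with bounded norms. The proof of Theorem \ref{Cauchydual} shows this is equivalent to $\wst$-convergence.

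\emph{Main obstacle.} The delicate step is the surjectivity and lower norm bound, i.e.\ the uniform bound
$$\|\psi(\cdot/n)(\sigma * \mathrm{St}_{1/n}\varphi)\|_\SOPsp \le C\|\sigma\|_\SOPsp .$$
I will get it from two facts: $\SOPRd$ is a Banach module over $\SORd$ under pointwise multiplication, and over $\LiRd$ under convolution. The norms of the dilates $\psi(\cdot/n)$ in the multiplier algebra, and of $\mathrm{St}_{1/n}\varphi$ in $\LiRd$, stay bounded as $n$ grows: the first by $\|\psi(\cdot/n)\|_{\FLisp} = \|\hat\psi\|_1$, the second by dilation invariance of the $\Lisp$-norm. $\wst$-convergence is then checked on $\SORd$ by dominated convergence, using the density of $\ScRd$ together with the uniform bound.
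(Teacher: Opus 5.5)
Your proposal is correct and follows the paper's route. Completeness is transferred from $\SOPRd$ through the identification of Theorem \ref{Cauchydual}, continuity comes from $\|V_g(f)\|_\infty \leq \|g\|_1 \|f\|_\infty$, and $\wst$-density comes from approximating each $\sigma \in \SOPRd$ by functions in $\SORd \subset \CbRd$. You also make explicit the surjectivity and the reverse norm bound that the corollary takes for granted; the paper supplies these only afterwards, via the regularization $(\Strho_{\negthinspace n} g_0 \ast \sigma)\cdot \Drho_{\negthinspace n} g_0$. In your version you should state the normalizations $\int \varphi(x)\,dx = 1$ and $\psi(0)=1$, since without them the limit is $\widehat{\varphi}(0)\psi(0)\,\sigma$ rather than $\sigma$.
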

\begin{proof}
Obviously the completeness of $\SOPRdN$ transfers to the
``mild completion'' of $\CbRd$. A direct proof would be
possible, by elementary means, but it would be a bit cumbersome (and
less informative). The density follows from the $\wst$-density
of $\SORd$ in $\SOPRd$, combined with the inclusion
$$ \SORd \hookrightarrow \CbRd \hookrightarrow \SOPRd. $$
\end{proof}
\begin{remark}
Of course it would be possible to provide a direct proof, by means
of absolutely convergent series, but this argument would much
longer, but of course more elementary in terms of tools.
\end{remark}

Now we have to take care of the converse: Every element $\siSOP$
defines an equivalence class of distributions, in such a way that
this assignment is the inverse to the embedding just discussed.

For this purpose we will use the regularization properties which
arise from the convolution relations and pointwise multiplication
of Wiener amalgam spaces, which provide the following facts (see
e.g.\cite{fe83,fe81-2,fegr85}):
 \begin{proposition} 
The following inclusions are valid for the Wiener amalgam spaces
relevant for the treatment of $\SORd = \WFLiliRd$ and its dual
space:
\begin{enumerate}
  \item $\WFLili(\Rdst) \ast \WFLinlin(\Rdst) \subset \WFLilin(\Rdst)$;
  \item $\WFLilin(\Rdst) \cdot \WFLili(\Rdst) \subset \WFLili(\Rdst)$;
  \item $\WFLili(\Rdst) \cdot \WFLinlin(\Rdst) \subset \WFLinli(\Rdst)$.
\end{enumerate}
In fact, the Wiener amalgam space $\WFLilinRd$ coincides
(with equivalence of norms) with the space of pointwise
multipliers of $\SORdN$.
\end{proposition}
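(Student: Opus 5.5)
We need to prove the three inclusions for Wiener amalgam spaces, together with the characterization of the pointwise multipliers of $\SORd$. The plan is to reduce everything to local estimates plus a discrete convolution or multiplication estimate on the global sequence components.

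Fix a smooth bounded uniform partition of unity $(\psi_k)_{k\in\Zdst}$, with $\psi_k = T_k\psi$ and $\psi\in\CcRd$ smooth. Define
$$\|f\|_{\Wsp(\Bsp,\lsp^p)} := \big\| \big(\|f\psi_k\|_{\Bsp}\big)_{k\in\Zdst}\big\|_{\lsp^p}$$
for the local components $\Bsp = \FLi$ and $\Bsp = \FLinsp$. Two standard facts will be used throughout:
\begin{itemize}
\item a different bounded uniform partition of unity gives an equivalent norm;
\item $\FLi$ is a Banach algebra under pointwise multiplication, and $\FLi\cdot\FLinsp\subset\FLinsp$, since on the Fourier side this is $\Lisp\ast\Lsp^\infty\subset\Lsp^\infty$.
\end{itemize}

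\textbf{Multiplication, items 2 and 3.} Write $f g = \sum_k f\psi_k \cdot g$. Choose $\varphi\in\CcRd$ smooth with $\varphi\equiv 1$ on $\supp\psi$, and put $\varphi_j := T_j\varphi$, so that $f\psi_k = f\psi_k\varphi_k$. Then
$$\|fg\,\psi_k\|_{\FLinsp}\le \|f\psi_k\|_{\FLisp}\,\|g\varphi_k\|_{\FLinsp}.$$
Since $\varphi_k$ is covered by finitely many $\psi_j$ (their number is independent of $k$), we get
$$\|g\varphi_k\|_{\FLinsp}\le C\sup_j\|g\psi_j\|_{\FLinsp}.$$
Summing over $k$ gives item 3. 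Item 2 follows in the same way with $\FLi$ as the local component. The multiplier statement then has two directions. The inclusion of $\WFLilinRd$ into the multipliers is item 2. For the converse, a multiplier $m$ satisfies $\|m\psi_k\|_{\FLisp}\le C\|m\,T_k\varphi\|_{\SOsp}\le C'\|T_k\varphi\|_{\SOsp}$. The right-hand side is bounded uniformly in $k$ because $\SORd$ is isometrically translation invariant, and the equivalence of norms then follows from the closed graph theorem.

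\textbf{Convolution, item 1.} Here the plan is to decompose $f=\sum_k f\psi_k$ with $f\in\WFLili$, so that each piece lies in $\FLi$ with compact support in $k+Q$, $Q$ a fixed compact set. Then
$$(f\psi_k)\ast g=\IFT\big(\widehat{f\psi_k}\cdot \hat g\big).$$
Only the part of $g$ near $k+Q-x$ contributes at the point $x$. Localizing $g$ via $g=\sum_j g\psi_j$ gives
$$(f\psi_k)\ast(g\psi_j),$$
which is supported in $k+j+Q+Q$. Its local $\FLinsp$ norm is controlled by $\|\widehat{f\psi_k}\|_\infty\,\|\widehat{g\psi_j}\|_1$. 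This is the wrong way round, since we only know that $\widehat{g\psi_j}$ is bounded.

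So I would bound it instead by $\|\widehat{f\psi_k}\|_1\|\widehat{g\psi_j}\|_\infty$, which is legitimate because the Fourier transform of a convolution is the product of the transforms. This gives an $\Lisp$ bound on the Fourier side, i.e.\ a local $\FLi$ estimate. For the global component, counting the finitely many overlaps gives
$$\sup_m \sum_{k+j\approx m} a_k b_j \le \|a\|_{\lisp}\|b\|_{\linsp},$$
which yields $\WFLilin$.

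The main obstacle is the local estimate in item 1. One has to show that the product of a compactly supported $\FLi$ function with a compactly supported $\FLinsp$ function (in the sense of distributions) makes sense, and that $\widehat{u\ast v}=\hat u\hat v$ holds for these objects. I would handle this by density: approximate $g\psi_j$ by Schwartz functions in the $\wst$ sense, then use the uniform bounds together with lower semicontinuity of the $\FLisp$ norm.
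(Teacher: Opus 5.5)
Your proposal is correct and is essentially the standard argument. The paper gives no proof of its own; it simply invokes the general pointwise-multiplication and convolution theorems for Wiener amalgam spaces from the cited literature. Those theorems are proved exactly by your BUPU localization, which pairs the local estimates $\FLi\cdot\FLi\subset\FLi$, $\FLi\cdot\FLinsp\subset\FLinsp$, $\FLi\ast\FLinsp\subset\FLi$ with the global ones $\linsp\cdot\lisp\subset\lisp$, $\lisp\ast\linsp\subset\linsp$, plus the closed graph theorem for the multiplier norm.

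The ``main obstacle'' you flag in item 1 is not really one. $f\psi_k$ and $g\psi_j$ are compactly supported distributions, so their convolution is defined and $\widehat{u\ast v}=\hat u\,\hat v$ holds by standard Schwartz theory. The density detour is therefore unnecessary. In any case, lower semicontinuity of the norm alone would only give a measure on the Fourier side, not an $\Lisp$ function.
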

Using the fact which has been the original definition (in \cite{fe81-2},
see also \cite{gr01} where these spaces are introduced as modulation spaces)  that
$$ \SORd = \WFLili(\Rdst) \qandq \SOPRd = \WFLinlin(\Rdst)$$
with equivalence of norms one easily combines these relations to
obtain
\begin{equation} \label{SOSOPREG1}
 [\SOPRd \ast \SORd] \cdot \SORd \subset \SORd \qandq
  [\SOPRd \cdot \SORd] \ast \SORd,
\end{equation}
of course again combined with corresponding norm estimates, stating
that there exists some $C'> 0$ (depending on the choice of the norms)
such as (for example)
\begin{equation} \label{SOSOPREG2}
\|(\sigma \ast g) \cdot h\|_\SOsp \leq  C' \siSOPN \|g\|_\SOsp \|h\|_\SOsp,
\quad \forall \siSOP \, \,\, \mbox{and} \,\,
\,\forall g,h \in \SOsp.
\end{equation}

In order to create a {\it mild Cauchy sequence} representing
$\siSOP$ it is thus enough to make use of bounded approximate
units from in $\SORd$, i.e.\ an Dirac sequence forming
a bounded approximate unit for convolution in $\LiRdN$ (i.e.
bounded with respect to the $\LiRd$-norm) consisting entirely
of elements from the dense subspace $\SORd \subset \LiRdN$,
and another bounded approximate unit for pointwise multiplication,
now bounded with respect to the Fourier algebra $\FLiRdN$.

Of course one can get one from the other. In particular,
it is clear, that for a bounded sequence $(g_n)_{n \geq 1}$
in $\LiRdN$ with
\begin{equation} \label{LiRdAU1}
\| g_n \ast g - g \|_\Lisp \to 0 \,\, \mbox{for} \,\,
  n \to \infty, \,\, \forall g \in \LiRd,
\end{equation}
then (by the convolution theorem) $ h_n  = \widehat{g_n}$ is
a bounded sequence in $\FLiRdN$ with
\begin{equation} \label{FLiRdAU1}
\| h_n \ast h - h \|_\FLisp \to 0 \quad \mbox{for} \, \,\,
  n \to \infty, \,\, \forall h \in \FLiRd,
\end{equation}
and in fact vice versa.

The most convenient way to produce such a Dirac sequence is of
course (area preserving) compression of a function $g \in \SORd$ with
$ \intRd g(x)dx = \hat{g}(0) = 1$,
using the dilation operator
$\Strho$ given by
$$ [\Strho (g)](z) = \rho^{-d} g(z/\rho), \quad \rho > 0, z \in \Rdst,$$
satisfying (\ref{LiRdAU1}) for $\rho_n \to 0$,
with $$\|\Strho(g)\|_\Lisp = \|g\|_\Lisp,\quad g \in \LiRd, \rho > 0.$$
The Fourier transform of such a sequence is characterized by the
``value-preserving'' dilation operator
$\Drho$, given by
$$ [\Drho(h)](z) = h(\rho z), \quad \rho > 0, z \in \Rdst,$$
with $$\| \Drho(h) \|_\infty = \|h\|_\infty, \quad h \in \CbRd, \rho > 0.$$
In other words, we have the {\it intertwining relation}
\begin{equation} \label{interwdil}
\FT(\Strho(g)) = \Drho(\FT(g)), \quad g \in \LiRd, \, \rho > 0.
\end{equation}

Whenever $\intRd g(x) dx = 1 $ for $g \in \LiRd$ or
equivalently $h(0) = \ghat(0) = 1$ for $ h \in \FLiRd$
the corresponding families
$$ (\Strho_{\negthinspace n}(g))_{n \geq 1} \qandq
(\Drho_{\negthinspace n}(h))_{n \geq 1}$$
form such approximate units for any (fixed) sequence $\rho_{\negthinspace n} \to 0$.

For our purpose it is most convenient to choose $g = g_0$, given as
 $g_0(t) = e^{- \pi \|t\|^2}  = \prod_{j=1}^d  e^{- \pi t_j^2},  $
the normalized Gauss function, which belongs to $\ScRd \subset \SORd$
and is Fourier invariant, i.e.\ we have $g = g_0 = \widehat{g_0} = h$
and our assumptions are   satisfied.

Putting all these observations together we get: For any sequence
$(\rho_n)_{n\geq 1}$ with $\limninf \rho_{\negthinspace n} = 0$ the sequence
\begin{equation} \label{regastdot}
   f_n = (\Strho_{\negthinspace n} g_0 \ast \sigma) \cdot \Drho_{\negthinspace n} g_0
\end{equation}
or alternatively
\begin{equation} \label{regdotast}
   \tilde{f}_n = (\Drho_{\negthinspace n} g_0 \cdot \sigma)
   \ast \Strho_{\negthinspace n} g_0
\end{equation}
describe (equivalent) mild Cauchy sequences,
both representing the given $\siSOP$.

In fact, one has (for the case of $(f_n)$) for any $(t,s) \in \TFd$:
\begin{equation}
V_g(f_n)(s,t) = \langle  (\Strho_n g_0 \ast \sigma) \cdot \Drho_n g_0,
M_{-s} T_{t} g \rangle =
\end{equation}
by the  evenness of $g_0$ and hence $\Strho_n g_0$,
and writing  for the right hand  for fixed $(t,s) \in \TFd$
simply
$ f = M_{-s} T_t g \in \SORd$:
$$
= \langle  (\Strho_n g_0 \ast  \sigma) \cdot \Drho_n g_0, f \rangle =
  \langle  (\Strho_n g_0 \ast  \sigma), \Drho_n g_0 \cdot f \rangle =
    \sigma (\Strho_n g_0 \ast [\Drho_n g_0 \cdot f]).
$$
Thus it is finally left to us to verify that the argument
of $\sigma$ in the last expression is in fact convergent
to $f$ in the norm of $\SORdN$.

This can be derived from the fact (for any Segal
algebra, see \cite{re68,re71})
\begin{equation}
\| \Strho_n g_0 \ast f  - f \|_\SOsp \to 0 , \quad \mbox{for} \,\,
   n \to \infty, \,\, \forall f \in \SORd,
\end{equation}
and (by just taking the Fourier transform of this last equation
and replacing $\hatf \in \FT(\SORd) = \SORd$ by $f$ again):
\begin{equation}
\| \Drho_{\negthinspace n} g_0 \cdot f  - f \|_\SOsp \to 0 , \quad \mbox{for} \,\,
   n \to \infty, \,\, \forall f \in \SORd.
\end{equation}
These two estimates can be combined to the required claim
by the following chain of inequalities, using the triangular equation
and the $\Lisp$-boundedness of the Dirac sequence $(\Strho_n g)_{n \geq 1}$.
Given $f \in \SORd$ one has for $n$ large enough:
$$
\sonorm{ \Strho_{\negthinspace n} g_0 \ast [\Drho_{\negthinspace n} g_0 \cdot f] - f
}
\leq
\sonorm{ \Strho_{\negthinspace n} g_0 \ast [\Drho_{\negthinspace n} g_0 \cdot f - f ]}
+
\sonorm{  \Strho_{\negthinspace n} g_0 \ast f - f }
$$
\begin{equation} \label{doubleApp1}
\leq \linorm{\Strho_{\negthinspace n} g_0} \cdot \sonorm{\Drho_{\negthinspace n} g_0
\cdot f - f} + \eps
= (\linorm{g} +1 ) \cdot \eps.
\end{equation}
This establishes that each $\siSOP$ defines an {\it mild Cauchy sequence}.

It is clear from the proof that of course the mild distribution
arising from such a mild CS is in fact the original mild distribution,
since we have
\begin{equation} \label{convback1}
 \lim_{n \to \infty}  V_g (f_n)(t,s) = V_g \sigma(t,s), \quad \forall (t,s) \in
 \Rtst.
 \end{equation}
%
Altogether the above considerations show that it is possible to
view $\SOPRdN$ as a kind of ``natural completion'' of $\CbRdN$,
based on the concept of {\it mild Cauchy sequences}, very much
in the spirit of the sequential approach to distributions, as
featured by Lighthill (\cite{li58}) or as
pointed out in the 
book of Bracewell (\cite{br83}, Chap.5 and Chap.6).

\begin{remark}
Note that the relation $\ScRd \ast (\ScRd \cdot \ScPRd) \subset \ScRd$,
combined with  (\ref{doubleApp1})   implies
that $\ScRd$ is dense in $\SORdN$.
\end{remark}
\section{Connections to Gabor Analysis}
In practice it is not possible to compute $V_g(\sigma)$ 
a continuous functions of $2d$ variables
for uncountably many arguments $(t,s) \in \Rtdst$.
Hence one may ask whether
it is enough to verify boundedness resp.\ pointwise convergence
on a sufficiently large resp.\ dense subset of the TF-plane (phase space).

There is a precise answer to this question, closely connected
to the theory of Gabor frames. Without going into details (and
leaving it to the reader to consult with \cite{gr01}, \cite{fest98},
\cite{fest03} or other more recent sources on Gabor analysis)
let us summarize the most important facts. Although one could
use general lattices  $\Lambda = \AA \ast \Zdst \lhd \TFd$,
for suitable non-singular $2d \times 2d$-matrices $\AA$
we  restrict our attention
to lattices of the form $\Lambda = \abZZd$,  with 
$a,b >0$. We call $a$ the {\it time-step} (or grid constant in the
image domain for $d=2$) and $b$ the {\it frequency-step} (or grid constant
in the wave-number domain for $d=2$).

A {\it (regular) Gabor family} generated by the triple $(g,a,b)$ is a family of the
form $ \{M_{nb} T_{ka} g \suth  n,k \in \Zdst \} $, for some $g \in \LtRd$.
In a short-hand notation we will use the more abstract symbol
$g_\lambda := M_{nb} T_{ka} g$ for the Gabor atom located at
$\lambda = (ka,nb)$ in phase space.
A regular Gabor family is a {\it Gabor frame} if the {\it Gabor frame operator}
\begin{equation} \label{gabframop3}
 S(f):=  S_{(g,a,b)}f := \sumlaLa \langle f, \glam \rangle  \glam
\end{equation}
is a bounded and invertible operator on $\LtRdN$. Then any
$f \in \LtRd$ can be represented as a norm-convergent
double sum in $\LtRdN$, of the form
\begin{equation} \label{Gabrepres1}
 f = \sumlaLa V_\gd{(\lambda)} \glam, 
\end{equation}
where $\gd =  S^{-1}(g) = S_{(g,a,b)}^{-1} g$
is the {\it dual Gabor atom},
providing the minimal norm representation of $f$ using the
Gabor family $\gLam$, with the important estimate
\begin{equation}
 \|V_\gd f |_\Lambda\|_\ltLam \leq C \|f\|_\LtRd, \quad
 \forall f \in \LtRd,
\end{equation}
for some constant depending only on $g$ and $(a,b)$ (in fact, suitable
normalized only on $\gamma_0$, i.e. the density of the lattice $\Lambda
= \abZZd$).

It is one of the most important results of Gabor Analysis
that $g \in \SORd$ not only implies that $S_{g,a,b}$ is a bounded
operator on $\SORdN$, due to the  estimate
$$\|(V_\gd(f)(\lambda))|_\Lambda\|_{\liLam}
\leq C_1 \sonorm{f}, \forall f \in \SORd, $$
but also that $\gd$ belongs $\SORd$, see \cite{grle04,grle06}. 
\begin{lemma} \label{SOgabfram1}
Given non-zero $g \in \SORd$ there exists $\gamma_0 > 0 $  such that for
$ a \leq \gamma_0, b \leq \gamma_0$ one has:
$(g,a,b)$ generates a Gabor frame, with $\gd \in \SORd$.
Hence $V_\gd(\sigma)$ is well defined for $\sigma \in \SOPRd$,
and one has: 
\begin{itemize}
\item $f \in \SORd$ if and only if \,\, $V_g(f)|_\Lambda \in \liLam$;
\item $f \in \SORd$ if and only if  \,\, $V_{\gd}(f)|_\Lambda \in \liLam$;
\item $f \in \SORd$ if and only if \,\,  $f$ has a representation
\begin{equation} \label{Gaborexp4}
f = \sumlaLa c_\lambda \glam, \,\, \mbox{for some
 sequence} (c_\lambda)_\lainLa \in \liLam.
\end{equation}
\end{itemize}
In each case the $\lisp$-norm of the involved coefficients (the
infimum over all possible representations in the case of (\ref{Gaborexp4}))
defines an equivalent norm on $\SORdN$.
\end{lemma}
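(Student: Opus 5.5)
We take as given the two deep facts quoted just before the lemma. First, for $g\in\SORd$ the frame operator $S=S_{(g,a,b)}$ is bounded on $\SORdN$, with coefficient estimate $\|V_g(f)|_\Lambda\|_{\liLam}\le C_1\sonorm f$. Second, for $a,b$ small enough $(g,a,b)$ is a Gabor frame for $\LtRd$ with $\gd=S^{-1}g\in\SORd$ (Gröchenig–Leinert). The first step is then to fix $\gamma_0$ so that every $a,b\le\gamma_0$ produces a frame. For instance, choose $\gamma_0$ so that the Walnut-type estimate for $\|S-\gamma^{-1}I\|$ is below $\gamma^{-1}$ for $g\in\SORd$. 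Since $\gd\in\SORd\subset\SORd''$, the definition $V_\gd(\sigma)(t,s)=\sigma(M_{-s}T_t\gd)$ makes sense for $\siSOP$, which settles the remark about $V_\gd(\sigma)$.

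Next we prove the three equivalences as a cycle. The implication $f\in\SORd\Rightarrow V_g(f)|_\Lambda\in\liLam$ is exactly the quoted coefficient estimate. The same estimate with $g$ replaced by $\gd\in\SORd$ gives $f\in\SORd\Rightarrow V_\gd(f)|_\Lambda\in\liLam$.

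For the converse directions we use that $\sonorm{\glam}=\sonorm g$, because time-frequency shifts are isometric on $\SORdN$. Hence any series $\sumlaLa c_\lambda\glam$ with $c\in\liLam$ converges absolutely in $\SORdN$, with norm at most $\|c\|_1\sonorm g$. This gives the third bullet in the direction ``representation $\Rightarrow f\in\SORd$'', together with the norm bound by $\inf\|c\|_1$.

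Now suppose $f\in\LtRd$ and $V_\gd(f)|_\Lambda\in\liLam$. The $\Ltsp$-reconstruction (\ref{Gabrepres1}) writes $f=\sumlaLa V_\gd f(\lambda)\glam$. This is a representation of the type (\ref{Gaborexp4}), so $f\in\SORd$, and the same reasoning shows that $f\in\SORd$ implies (\ref{Gaborexp4}) with $c=V_\gd f|_\Lambda$. For $V_g(f)|_\Lambda\in\liLam\Rightarrow f\in\SORd$ we use the dual reconstruction $f=\sumlaLa V_g f(\lambda)\,\gd_\lambda$, valid because $S$ commutes with $M_{nb}T_{ka}$. Since $\gd_\lambda$ are isometric shifts of $\gd\in\SORd$, absolute convergence again gives $f\in\SORd$.

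Collecting the estimates, all four quantities are mutually equivalent:
\[
\sonorm f,\quad \|V_g f|_\Lambda\|_1,\quad \|V_\gd f|_\Lambda\|_1,\quad \inf\|c\|_1 .
\]
Each is bounded by a constant times the next, using $C_1$, $\sonorm g$ and $\sonorm\gd$. This gives the norm-equivalence claim.

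The main obstacle is where the argument starts. If one does not assume $f\in\LtRd$, then for a general $f\in\SOPRd$ (or $\CbRd$) the reconstruction must be justified in the $\wst$-sense. To handle this we would extend $S$ to $\SOPRd$ by duality. Then $f=\sumlaLa V_\gd f(\lambda)\glam$ holds $\wst$; when the coefficients are in $\liLam$, that $\wst$-limit coincides with the $\SORd$-norm limit, which closes the gap. The genuinely deep input, $\gd\in\SORd$, is taken from \cite{grle04,grle06} and not reproved.
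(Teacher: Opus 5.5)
Your argument is correct and takes essentially the same route as the paper. The paper gives no separate proof of Lemma~\ref{SOgabfram1}: it invokes the boundedness of $S_{(g,a,b)}$ on $\SORdN$ with the $\lisp$-coefficient estimate and the Gr\"ochenig--Leinert result $\gd\in\SORd$, and your absolute-convergence and (weak$^*$-)reconstruction argument is the routine derivation of the three equivalences and the norm equivalence from those inputs; the only slip is cosmetic, namely that in your Walnut-type comparison the reference multiple of the identity should be $(ab)^{-d}\ltnorm{g}^2\, I$ rather than the undefined $\gamma^{-1} I$.
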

Based on this observation it is easy to verify by simple modifications
of the proofs given above (the details are left
to the interested reader):
\begin{proposition}
In the situation of Lemma \ref{SOgabfram1}
a sequence $(h_n)_{n\geq 1}$ in $\CbRd$ is   a   mild Cauchy sequence
if and only if
$$ \sup_{n \geq 1} \sup_{k,l \in \Zdst} |V_g (h_n)( ak, bl)| < \infty $$
and
\begin{equation}\label{mildCSdef3}
  \left( V_g(h_n)(ak,kl) \right)_{n \geq 1}
\end{equation}
is a Cauchy-sequence (with respect to $n$) for any $k,l \in \Zdst$.
\end{proposition}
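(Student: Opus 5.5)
The plan is to view each $h_n \in \CbRd$ as a mild distribution $\sigma_n \in \SOPRd$ via (\ref{embCbRd1}). Then both conditions become statements about the family $(\sigma_n)$: one about its full STFT $V_g(\sigma_n)$ on $\TFd$, the other about the sampled STFT on the lattice $\Lambda = \abZZd$. The forward direction is trivial: boundedness on $\TFd$ implies boundedness on $\Lambda$, and pointwise Cauchy on $\TFd$ implies pointwise Cauchy at the lattice points $(ak,bl)$. All the work is in the converse. For it I would follow the proof of Theorem \ref{Cauchydual}, replacing the continuous atomic decomposition (\ref{atomdec1}) by the discrete Gabor expansion from Lemma \ref{SOgabfram1}.

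\emph{Step 1 (uniform bound).} Let $K := \sup_n \sup_{\lambda\in\Lambda} |V_g(h_n)(\lambda)| < \infty$. Take any $f \in \SORd$. By the third item of Lemma \ref{SOgabfram1}, $f = \sumlaLa c_\lambda \glam$ with $\|c\|_{\liLam} \leq C \sonorm{f}$. Since this series converges in $\SORdN$ and $\sigma_n$ is continuous on $\SORd$, we may apply $\sigma_n$ termwise:
$$ |\sigma_n(f)| \leq \sumlaLa |c_\lambda|\, |\sigma_n(\glam)| \leq C K \sonorm{f}, $$
because $\sigma_n(\glam)$ equals $V_g(h_n)(\lambda)$, up to the sign convention $M_{-s}$ versus $M_{s}$ and complex conjugation. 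To avoid that nuisance I would use the real-valued window $g$ and the symmetric lattice, which is invariant under $l\mapsto -l$. Hence $\sup_n \sopnorm{\sigma_n} \leq CK$. By the lemma characterizing $\SOPRd$ through bounded STFTs, this gives $\sup_n \|V_g(h_n)\|_\infty \leq \gSON\, CK$, which is the boundedness part of the mild Cauchy definition.

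\emph{Step 2 (pointwise Cauchy).} Fix $(t,s) \in \TFd$ and put $f = M_{-s}T_t g \in \SORd$. Truncate its Gabor expansion to a finite sum $h = \sum_{\lambda\in F} c_\lambda \glam$ with $\sonorm{f-h} < \eps/(4CK+1)$. This is possible because $c \in \liLam$, so the tails of the expansion are small in $\SORd$. Then
$$ |V_g(h_{r_1})(t,s) - V_g(h_{r_2})(t,s)| \leq 2CK\sonorm{f-h} + \sum_{\lambda\in F}|c_\lambda|\,|\sigma_{r_1}(\glam)-\sigma_{r_2}(\glam)|. $$
The finite sum is below $\eps/2$ for $r_1,r_2$ large, by the lattice Cauchy hypothesis at the finitely many points of $F$. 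This is exactly the $\eps/2$ argument used in Theorem \ref{Cauchydual}.

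The main obstacle is justifying the termwise application in Step 1, namely that the $\lisp$-coefficient Gabor expansion of an arbitrary $f \in \SORd$ converges in the $\SOsp$-norm with a norm-controlled coefficient sequence. Lemma \ref{SOgabfram1} provides this, as $\|\glam\|_\SOsp = \gSON$ is uniformly bounded. A secondary issue is the bookkeeping of phase and sign conventions between $\sigma_n(\glam)$ and $V_g(h_n)(\lambda)$. These only contribute unimodular factors and lattice reflections, so they affect neither the bounds nor the Cauchy property.
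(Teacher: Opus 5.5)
Your proof is correct and is the ``simple modification'' the paper has in mind: you rerun the boundedness argument of the lemma characterizing $\SOPRd$ and the $\eps/2$ argument of Theorem \ref{Cauchydual}, replacing the atomic decomposition (\ref{atomdec1}) by the $\lisp$-Gabor expansion of Lemma \ref{SOgabfram1}. You also do not need to restrict to a real-valued window, which Lemma \ref{SOgabfram1} does not grant you anyway: with the paper's unconjugated convention one has exactly $\sigma_n(M_{bl}T_{ak}g) = V_g(h_n)(ak,-bl)$, and $\Lambda$ is symmetric.
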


For specific Gabor atoms $g \in \SORd$ (sufficient conditions can be
found in \cite{gr96} or \cite{fewe06}) one  can show that the otherwise
necessary condition $ a \cdto b < 1$ is also a sufficient condition
for creating a Gabor frame. For a long time this was known to be a
valid criterion for the Gauss function (see \cite{se92-1} and \cite{ly92}).
Quite recently this criterion has been extended to the class of
 {\it totally positive functions} 
in \cite{grst13}.


\section{Natural Extension of Operators}

The idea of {\it generalized functions} or {\it distributions} is not so
much to discuss linear functionals, but rather treat objects which are
``more general than functions'' as if they where functions. In fact,
one wants to include in the mathematical discussions objects which
arise naturally in analysis, such
 as Dirac measures, Dirac combs or pure frequencies
$\chi_s(t) = e^{2 \pi i \langle s ,t \rangle }$ which are either not
ordinary (pointwise defined) functions resp.\ not integrable.
Still, one would like to define operations like convolution,
pointwise multiplication or Fourier transforms on these objects
in a way which extends these operations, as known for
ordinary functions, as  ``naturally'' as possible.

Distributions can be shifted, multiplied with decent functions,
they can be dilated (or even rotated for $d > 1$) and one can
take a Fourier transform. It is also possible to define their
support and it behaves in the expected way (e.g.\ with respect
to translation or dilation operators), we will not discuss this
in detail here.

Clearly the extension of the operators defined on ordinary functions,
say at least on $\SORd$ or $\CbRd$ to the more general setting
of $\SOPRd$ or in our setting to $ECmiCS$ should be compatible,
i.e.\ the extended operator should always be the (only) natural
operator defined on the extension of the given operator in such
a way that one has for any ordinary function:

{\bf{Given an ordinary function and applying the operator
first and then the embedding into ECmiCS should be the same as
applying the extended operator to the 
ECmiCS generated by the
ordinary function.}}

In the context of the Banach Gelfand Triple  $\SOGTrRd$ one
would argue that the operators should not only map
the Hilbert space to the Hilbert space, but also the
test functions to test functions, and finally the
dual spaces into each other, not only in a norm-continuous
way, but also in a $\wwst$continuous form.
But we will not make use of this connection.

The answer to this request is for most cases as natural as
it is simple. If an ECmiCS is represented by a mild Cauchy
sequence $(f_n)_{n \geq 1}$ one tries to define
the extension operator $\tilde{T}$ and via
$ (Tf_n)_{n \geq 1}$. Of course one has to verify in
concrete cases that this is well defined, i.e.\ maps
mild Cauchy sequences into mild Cauchy sequences and
preserves equivalence classes.

We will provide a short discussion of the key steps
only for the case of the Fourier transform $T = \FT$.
In this case we reduce the discussion to a sequence
$(f_n)_{n \geq 1}$ with $f_n \in \SORd$ for $n \geq 1$.

Let us first consider the sequence $(\hatf_n)_{n \geq 1}$.
In order to check that it is a mild Cauchy sequence we use
a Gaussian window $g = g_0$, because it has the advantage
of being Fourier invariant.  Then by Plancherel's Theorem
(see \cite{gr01}, formula (3.10)):
\begin{equation}\label{Fourgau1}
  V_{g_0}({\hatf}_n)(t,s) = \langle \hatf_n, M_s T_t g_0 \rangle
      =  \langle f_n,   T_{-s} M_{t} g_0 \rangle
       =   e^{-2 \pi i  t \cdot s }  \langle f_n, M_{t} T_{-s} g_0 \rangle,
\end{equation}
or in short
\begin{equation}\label{Fourgau2}
  \left( V_{g_0} ({\hatf}_n)\right) (t,s)
       =   e^{-2 \pi i   t \cdot s} V_{g_0}(f_n)(-s,t), \quad (t,s) \in
       \TFd.
\end{equation}
The same identities also allow to conclude that the mapping $f \mapsto \hatf$
preserves equivalence classes and that of course this extension is compatible
with the usual definition of an extended Fourier transform (possible
for any Fourier invariant Banach space of test functions):
\begin{equation}\label{Foursigma1}
   \hatsi(f) = \sigma(\hatf), \quad f \in \SORd, \siSOP.
\end{equation}

In fact, we verify that for $(t,s) \in \TFd$ one has:
$$V_{g_0}(\hatf)(t,s) =  \langle \hatf, M_s T_t g_0 \rangle =
 \langle  f, \IFT(M_s T_t g_0) \rangle
$$
with the limit
$ lim_{n \to \infty}  V_{g_0}(\hatf_n)(s,t),$
for any mild CS representing $\sigma$.


\section{Alternative Starting Points}

Although we consider $\CbRd$ as a natural starting point when it
comes to 
 describe the ``largest space of signals''
 for which the STFT is still a bounded functions via
{\it equivalence classes of mild Cauchy sequences} ({\bf ECmiCS})
one may ask whether, by the same form of ``completion'',
other starting points could be chosen, in order to get the
same space, but derive more easily additional properties
(like Fourier invariance). Alternatively, one might ask,
whether certain choices which are closer to applications
(like the use of periodic, discrete signals as point of
departure) yield other objects. Fortunately this will not
be the case. The discussion of these two points makes up
the current section.

The first result in this direction describes a general
observation, making use of the functional analytic setting:
\begin{proposition} \label{SUBSPwstapp}
Given a Banach space $\BspN  \hookrightarrow \SOPRdN$ and $\wst$-dense.
%
Assume   that there is a bounded sequence of operators
$(A_n)_{n \geq 1}$ on $\SOPRdN$ such that for any $n \geq 1$
$A_n$ maps $\SOPRd$ continuously into $\BspN$ 
 \begin{equation}\label{wstappBN}
   f = \wstlim_{n \to \infty} \,  A_n(f), \quad \forall f \in \SORd.
\end{equation}
Then the normed space of  {\bf ECmiCS} arising from $\BspN$
can be naturally identified with  the space of {\bf ECmiCS}
 arising from $\CbRdN$.
\end{proposition}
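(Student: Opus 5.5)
The plan is to reduce the statement to Theorem \ref{Cauchydual}. I will show that the space of {\bf ECmiCS} built from $\BspN$ is also naturally isomorphic to $\SOPRdN$, with equivalent norms, and then compose with the identification for $\CbRd$. Throughout, a mild Cauchy sequence in $\Bsp$ means a sequence $(h_n)$ in $\Bsp\subset\SOPRd$ with $\sup_n\|V_g(h_n)\|_\infty<\infty$ and $V_g(h_n)(t,s)$ convergent for every $(t,s)$. The $CS$-norm is defined exactly as in Definition \ref{CSnorm2}, with the infimum taken over representatives from $\Bsp$.

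\emph{Step 1 (from $\Bsp$-sequences to $\SOPRd$).} I will repeat the first half of the proof of Theorem \ref{Cauchydual} verbatim. It used only two facts: that the $f_n$ define a bounded sequence in $\SOPRd$ (now automatic, since $\Bsp\hookrightarrow\SOPRd$ and $\|V_g(\sigma)\|_\infty$ is an equivalent norm), and the atomic decomposition (\ref{atomdec1}). So a mild Cauchy sequence in $\Bsp$ is $\wst$-convergent to some $\sigma\in\SOPRd$, with $V_g(\sigma)$ equal to the pointwise limit of $V_g(h_n)$. Hence $\sigma$ depends only on the class ${\bf F}$, and $\siSOPN\le C\|{\bf F}\|_{CS}$. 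By uniqueness of the STFT this map is injective.

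\emph{Step 2 (surjectivity with norm control).} Given $\siSOP$, I will take $h_n:=A_n\sigma\in\Bsp$. The hypothesis is stated for $f\in\SORd$, so I first have to upgrade it to $A_n\sigma\to\sigma$ in the $\wst$ sense for every $\siSOP$. I expect this to be the main obstacle. I would try to read (\ref{wstappBN}) as pointwise convergence tested against $\SORd$ and pass to $\SOPRd$ by duality. Concretely, if the $A_n$ are adjoints, i.e.\ $\langle A_n\sigma,f\rangle=\langle\sigma,A_n^*f\rangle$ with $A_n^*f\to f$ in $\SORd$-norm, then convergence follows at once. This is exactly the structure of the regularizers $f\mapsto \Strho_n g_0\ast(\Drho_n g_0\cdot f)$ used in (\ref{doubleApp1}).

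Failing that, I would use the $\wst$-density of $\SORd$ in $\SOPRd$ together with the uniform bound $\sup_n\|A_n\|<\infty$. This requires the operators to be $\wwst$-continuous, which I would adopt as the intended reading of the hypothesis. Once $\wst$-convergence holds, testing against $f=M_{-s}T_tg\in\SORd$ gives $V_g(A_n\sigma)(t,s)\to V_g(\sigma)(t,s)$ pointwise. In addition, $\sup_n\|V_g(A_n\sigma)\|_\infty\le C\sup_n\|A_n\|\,\siSOPN$. So $(A_n\sigma)$ is a mild Cauchy sequence in $\Bsp$ representing $\sigma$, and its class has $CS$-norm at most a constant times $\siSOPN$.

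\emph{Step 3 (conclusion).} Steps 1 and 2 give a linear bijection between {\bf ECmiCS}$(\Bsp)$ and $\SOPRd$ with two-sided norm estimates. Composing with the inverse of the bijection from Theorem \ref{Cauchydual} yields the claimed identification. It is natural in the following sense. Two mild Cauchy sequences, one from $\Bsp$ and one from $\CbRd$, correspond to each other precisely when their STFTs have the same pointwise limit. This is the same rule as Definition \ref{mildCSequ} applied across the two spaces, so the identification is intrinsic and involves no choice of the $A_n$.
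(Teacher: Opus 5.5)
Your argument works, under the same reading of the hypothesis that the paper uses, but it follows a different route.

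\textbf{The paper's route.} The paper stays at the level of sequences. Given a mild Cauchy sequence $(f_n)$ in $\CbRd$, it replaces it by the diagonal sequence $b_n=A_nf_n\in\Bsp$. It shows equivalence via $|A_nf_n(h)-f_n(h)|=|f_n(A_n^*h-h)|\le\sup_k\sopnorm{f_k}\,\sonorm{A_n^*h-h}\to0$ for $h=M_sT_tg$, and controls norms by $\sup_n\sopnorm{A_nf_n}\le C'\sup_n\sopnorm{f_n}$.

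\textbf{Your route, compared.} You identify both sequence spaces with $\SOPRd$ and apply $A_n$ to the fixed limit $\sigma$. This buys you three things:
\begin{itemize}
\item You treat both directions and both norm inequalities symmetrically. The paper writes out only the passage $\CbRd\to\Bsp$ and one inequality; the converse is implicit in Theorem~\ref{Cauchydual}.
\item Since $\sigma$ is fixed, you only need $\langle\sigma,A_n^*f\rangle\to\langle\sigma,f\rangle$, i.e.\ weak convergence of $A_n^*f$. The diagonal argument, where $f_n$ varies with $n$, genuinely needs $\sonorm{A_n^*h-h}\to0$.
\item In exchange, the paper's route produces an equivalent $\Bsp$-representative of a given sequence directly, without passing through the limit object.
\end{itemize}
Your option (a) is exactly the tacit strengthening of (\ref{wstappBN}) that the paper's proof also relies on. The prototype regularizers of (\ref{regastdot}) satisfy it by (\ref{doubleApp1}).

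\textbf{The fallback in Step 2 fails.} $\wst$-density of $\SORd$, $\sup_n\|A_n\|<\infty$ and $\wwst$continuity do not yield $A_n\sigma\to\sigma$. The $\eps/3$ argument needs norm approximation of $\sigma$, and $\SORd$ is not norm dense in $\SOPRd$.
\begin{itemize}
\item Counterexample: take $\Bsp=\SORd$, $R_n\sigma=(\Strho_n g_0\ast\sigma)\cdot\Drho_n g_0$, and a fixed $0\neq\theta\in\SORd$. The operators $A_n\sigma=R_n\sigma+\sigma(T_{ne_1}g_0)\,\theta$ satisfy every hypothesis.
\item Yet $A_n\mathbf{1}\to\mathbf{1}+\theta$ in the $\wst$ sense. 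The paper's diagonal argument also fails on this example, with $f_n=\mathbf{1}$.
\end{itemize}

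\textbf{How to close the gap under the literal hypothesis.} The proposition still holds if you replace $A_n\sigma$ by a diagonal sequence.
\begin{itemize}
\item Since $R_m\sigma\in\SORd$, the hypothesis gives $A_nR_m\sigma\to R_m\sigma$ as $n\to\infty$, while $R_m\sigma\to\sigma$.
\item All these elements lie in a fixed ball of $\SOPRd$. On that ball the $\wst$-topology is metrizable, because $\SORd$ is separable.
\item So you can choose $n_m$ with $A_{n_m}R_m\sigma\to\sigma$ in the $\wst$ sense and $\sopnorm{A_{n_m}R_m\sigma}\le C\siSOPN$.
\end{itemize}
This completes Step 2 without any adjoint assumption.
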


\begin{proof}
First of all we recall that $\SORd$ is $\wst$-dense in $\SOPRd$
and that for any given $\siSOP$ the sequence
$$
f_n = R_n(\sigma) := \Drho_n g_0 \cdot [\Strho_n g_0 \ast \sigma]
$$
is bounded in $\SOPRdN$ and tends to $\sigma$ in the $\wst$-sense.
The sequence of operators $(R_n)$ is the prototype of a sequence
as alluded in the assumptions of this proposition.

For the proof of our claim let us fix the Banach space $\BspN$
and the sequence $(A_n)_{n \geq 1}$.

Given a mild CS-sequence in $\CbRd$ we want to find a representative
in the same equivalence class consisting of members of $\Bsp$ (instead
of the original choice $\CbRd$).

Given thus $(f_n)_{n \geq 1}$ in $\CbRd$ (viewed as elements
of $\SOPRd$) we may consider $b_n :=(A_n f_n)_{n \geq 1}$.
By the assumptions we have $b_n \in \Bsp$ for each $n \geq 1$. Next we
verify that this is (of course) also a mild CS. Given $g \in \SORd$ we
fix $(t,s) \in \TFd$ and put $h = M_s T_t g$ and watch the behavior of
$$  V_g(b_n)(t,s) = V_g(A_n f_n)(t,s) = A_n f_n (h) $$
for $n \to \infty$. One has
$$ | A_n f_n(h) - f_n(h)| =  |f_n(A_n^*h - h)| \to 0 \quad \mbox{for} \,\,
n \to \infty. $$
This implies at once that the new sequence $(b_n)_{n \geq 1}$ is equivalent
to the original one, as well as (by consequence) it is a mild Cauchy sequence
itself.

By a similar argument 
 one verifies that two equivalent sequence
$(f_n)$ and $({\tilde f}_n)$ give rise to equivalent sequences
$A_n(f_n)$ and $A_n(\tilde{f})$, thus showing that the equivalence classes
are preserved by the replacement.

It remains to be shown that the new version of {\bf ECmiCS} allowing
only the representatives from $\Bsp$ describes an equivalent norm, i.e.\
to verify that the restriction/modification in the set of representatives
does not have an effect on the corresponding infimum's norm. This is
quite plausible because the norm estimate only uses $\supnorm{\Vgsi}$
resp.\ $\SOPsp$-norms.

Concrete estimate rely on the boundedness of the family $(A_n)$
on $\SOPRdN$:
$$ \sup_{n \geq 1} \supnorm{V_g(A_n(f_n))} = \sup_{n \geq 1} \sopnorm{A_n(f_n)}
  \leq  C' \cdot \sup_{n \geq 1} \sopnorm{f_n}.
$$
which implies that the corresponding inf-norms for {\bf{ECmiCS}}  are equivalent.
\end{proof}

\section{References and History}

The sequential approach to distribution theory is not new. Usually
described as a way to handle ``generalized functions'' without
making ({\it explicit!}) use of methods from functional analysis
it offers the possibility to deal with objects which cannot be
treated in the context of ordinary functions or even equivalence
classes of measurable functions (as they are described by the
space $\LpRdN$, for $1 \leq p \leq \infty$).

The most well known book in that direction is certainly the
small booklet of Lighthill (first published in 1958, see
\cite{li58}). But there are other ones
following a similar path, such as Jones (\cite{jo66-1}) or later
Antosik/Mikusinki \cite{anmisi73}. In the 
book by Bracewell (\cite{br83}, Chap.5 and Chap.6) also some comments are made about
the possibility of a sequential approach to distribution theory,
avoiding the theory of topological vector-spaces and duality theory,
ready to be used by engineers.

The approach starts with a definition of {\it good functions}  and
{\it fairly good functions}, and the observation that the Fourier
transform of a good functions is again a good function (later on,
that the Fourier inversion theorem applies in a pointwise
sense). In the usual literature describing the Schwartz theory
of {\it tempered distributions} good functions are called
{\it rapidly decreasing}  or also {\it Schwartz functions},
fairly good functions are just the pointwise multipliers of
the Schwartz space (see also \cite{baor14}). 

The {\it sequential approach} to tempered distributions then
goes on the define {\it regular sequences}  of test functions,
showing that for each regular sequence of test functions
$(f_n)$ in $\ScRd$ the limit
\begin{equation} \label{limregsequ}
 \lim_{n \to \infty} \intRd f_n(x) F(x) dx
\end{equation}
exists, for every $F(x) \in \ScRd$. Translated into the
functional analytic setting a regular sequence is a sequence
of {\it regular distributions} arising from test functions
$f_n \in \ScRd$ via
\begin{equation} \label{regdist002}
 \sigma_n(F) = \intRd F(x) f_n(x) dx,
\end{equation}
which is $\wst$-convergent to some limit.

Going on to call the sequence $(f_n)$ a distribution and
use the symbol $f$ for it is similar to identify an
infinite decimal expression with the sequence of
finite decimal approximations by truncating the
(potentially infinite) sequence at some point and
leaving the rest equal to zero.

In this sense the convention
$$  \intRd f(x) F(x) dx := \lim_{n \to \infty} \intRd f_n(x) F(x) dx $$
is meaningful, but one has to be careful not to confuse the
{\it symbolic expression} (left hand side of the above expression)
with an effective integral. But the situation is not so
far from they use of the symbol $1/\pi$ for the multiplicative
inverse of the irrational number $\pi$, which also is quite
different from the rational number $4/3$ which obviously is
the multiplicative inverse to $3/4$, because in the setting
of rational numbers both symbols have an a priori meaning.

Coming back to the situation described by Lighthill it is clear
that there are many sequences of test functions defining
the same distribution (e.g.\ a Dirac Delta distribution),
hence one has to define a natural equivalence relation.
Altogether the starting point for the sequential approach
to tempered distribution is the definition of such a
distribution as an {\it equivalence class of regular
sequences of test functions}.

This approach is also taken over by Jones (\cite{jo66-1}) and
Sequential Approach. Reprints
Anotsik/Mikusinksi,Sikorski in \cite{anmisi73}, where the
reader can find more details, and of course in \cite{li58}.

In order to compare the two settings let us just recall that it
is long known that $\ScRd \hookrightarrow \SORdN$ (see \cite{po80-1})
it is plausible that {\it mild Cauchy sequences}  of test functions
are also {\it regular sequences}. We just of the mention a small
extra condition, which allows us to formulate the following claim properly.
\newpage
\begin{lemma}
\begin{itemize}
\item For every mild CS $(h_n)$ in $\CbRd$ there exists an equivalent
      sequence $(f_n)$ in $\ScRd$. In other words, every equivalence
      class defining a mild distribution can be constituted with the
      help of mild Cauchy sequences from $\ScRd$.
\item Any mild CS in $\ScRd$ is also a regular sequence. Hence
     any mild distribution (viewed as ECmiCS)  o defines a
    tempered (Lighthill) distribution.
\item A regular sequence of test functions defines a mild distribution
if and only if
\begin{equation} \label{defmildbd1}
\sup_{n \geq 1}  \| V_g(f_n)\|_\infty  < \infty.
\end{equation}
\end{itemize}
\end{lemma}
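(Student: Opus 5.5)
The plan is to treat the three items separately, working throughout with the identification of Theorem \ref{Cauchydual} and the regularization operators $R_n(\sigma)=\Drho_n g_0\cdot[\Strho_n g_0\ast\sigma]$ from the proof of Proposition \ref{SUBSPwstapp}.

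For the first item, let $(h_n)$ be a mild CS in $\CbRd$. View each $h_n$ as an element of $\SOPRd$; its norm is bounded by $\supnorm{V_g(h_n)}$ up to a constant. I would apply Proposition \ref{SUBSPwstapp} with $\Bsp=\ScRd$. Since $\ScRd$ is not a Banach space, I would instead apply the proof of that proposition directly. Put $f_n:=R_n(h_n)$. Because $h_n\in\CbRd$ and the regularization involves convolution and multiplication by Gaussians, $\Strho_n g_0\ast h_n$ is smooth with all derivatives bounded. Multiplying by the Gaussian $\Drho_n g_0$ then gives $f_n\in\ScRd$. I expect this to be the only slightly delicate point: one checks by differentiating under the integral that each derivative of $\Strho_n g_0\ast h_n$ is bounded by $\supnorm{h_n}$ times an $\Lisp$-norm of a derivative of $\Strho_n g_0$.

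Equivalence follows exactly as in Proposition \ref{SUBSPwstapp}. For fixed $(t,s)$ and $h=M_{-s}T_tg$ we have
\[
V_g(f_n)(t,s)-V_g(h_n)(t,s)=\sigma_{h_n}(R_n^*h-h),
\]
and this tends to $0$. The reason is that $\sup_n\sopnorm{h_n}<\infty$ and $\sonorm{R_n^*h-h}\to 0$ by (\ref{doubleApp1}), since $R_n^*h=\Strho_n g_0\ast[\Drho_n g_0\cdot h]$ by evenness of $g_0$. Uniform boundedness of $\supnorm{V_g(f_n)}$ comes from the uniform boundedness of $R_n$ on $\SOPRdN$, which (\ref{SOSOPREG2}) gives after tracking the dilation constants (the $\Lisp$-norm of $\Strho_n g_0$ and the $\FLisp$-norm of $\Drho_n g_0$ are constant in $n$).

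For the second item, let $(f_n)\subset\ScRd$ be a mild CS. The argument in the proof of Theorem \ref{Cauchydual} shows that $\sigma_n(F)=\intRd F f_n\,dx$ converges for every $F\in\SORd$. Since $\ScRd\subset\SORd$ by \cite{po80-1}, this holds in particular for every $F\in\ScRd$, which is precisely the Lighthill regularity condition (\ref{limregsequ}). Hence $(f_n)$ is a regular sequence. Equivalent mild CS yield the same limit functional, so the associated tempered distribution is well defined and coincides with the image of $\sigma\in\SOPRd$ under $\SOPRd\hookrightarrow\ScPRd$.

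For the third item, necessity of (\ref{defmildbd1}) is built into Definition \ref{defmildCauchy}. For sufficiency, take a regular sequence with bounded STFTs. Since $g\in\ScRd$, regularity gives convergence of $V_g(f_n)(t,s)=\sigma_n(M_{-s}T_tg)$ for every $(t,s)$, which is the pointwise Cauchy property, and the bound is assumed. So $(f_n)$ is a mild CS. This uses a Schwartz window, which is admissible by the Remark on independence of $g$. The limit tempered distribution then has bounded STFT and therefore lies in $\SOPRd$ by the Lemma characterizing $\SOPRd$ within $\ScPRd$.
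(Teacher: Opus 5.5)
Your proposal is correct and follows essentially the paper's route. For the first item you use the diagonal Gaussian regularization $f_n=R_n(h_n)$. For the other two items you pass from Schwartz test functions to all of $\SORd$ by density plus uniform boundedness; you route this through Theorem \ref{Cauchydual} and the Remark on window independence, where the paper simply cites the density of $\ScRd$ in $\SORdN$.

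One small correction concerns the uniform bound of $R_n$ on $\SOPRdN$. It does not follow from (\ref{SOSOPREG2}), because the constants there are the $\SOsp$-norms of $\Strho_n g_0$ and $\Drho_n g_0$, which are unbounded as $\rho_n\to 0$. It follows instead from the module estimates $\|k\ast f\|_{\SOsp}\le\|k\|_1\|f\|_{\SOsp}$ and $\|m\cdot f\|_{\SOsp}\le\|m\|_{\FLisp}\|f\|_{\SOsp}$ applied to $R_n^*$, and these are exactly the constants your parenthetical names.
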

\begin{proof}
The first claim can be verified by simply applying to a given sequence
$(h_n)$ in $\CbRd$ the usual regularization operators, already used
in the earlier part of this note (i.e. smoothing with a Dirac-like
Gaussian and localization by pointwise multiplication with a dilated
Gaussian, very much like Fourier multipliers). By tuning the parameters
it is easy to verify that one can establish equivalence.

The remaining statements follow from the fact that $\ScRd$ is a
{\it dense} subspace of $\SORdN$. Details are left to the reader.
\end{proof}


\begin{remark}
As a final remark let us note that this article is part of a series of
articles aiming at a demonstration that the Banach Gelfand Triple can
be viewed as a universal tool for the treatment of problem in signal
processing. The connections to classical analysis are described in
\cite{fe19}. An alternative approach showing how to introduce
$\SORdN$ based on Riemann integrals is provided in \cite{feja19}. The
content of \cite{fe19-1} and \cite{fe09} show how it could be used
for the teaching of engineers and physicists.
%
There are of course many references to the classical approach to distribution theory
of L.~Schwartz (originally \cite{sc57}), such as \cite{fr98}, see also \cite{de13-2}.
\end{remark}

\section{Acknowledgement}{*}
The author would like to thank Mads Jakobsen and Anupam Gumber for a careful
reading of the manuscript and various corrections of the original version.
%
%
\bibliographystyle{abbrv}  

\end{document}